\newcommand{\rd}{\,\mathrm{d}}
\def\mQ{\mathcal{Q}}
\def\mT{\mathcal{T}}
\def\mC{\mathcal{C}}
\def\mS{\mathcal{S}}
\newcommand{\bF}{\mathbf{F}}
\newcommand{\be}{\mathbf{e}}
\newcommand{\bw}{\mathbf{w}}
\newcommand{\ba}{\mathbf{a}}
\newcommand{\bc}{\mathbf{c}}
\numberwithin{equation}{section}
\newtheorem{theorem}{Theorem}[section]
\newtheorem{proposition}[theorem]{Proposition}
\newtheorem{remark}[theorem]{Remark}
\begin{document}

	\title{Asymptotic-preserving and positivity-preserving implicit-explicit schemes for the stiff BGK equation\footnote{J. Hu and R. Shu's research was supported by NSF grant DMS-1620250 and NSF CAREER grant DMS-1654152. Support from DMS-1107291: RNMS KI-Net is also gratefully acknowledged. X. Zhang's research was supported by NSF grant DMS-1522593.}}
	
	\author{Jingwei Hu\footnote{Department of Mathematics, Purdue University, West Lafayette, IN 47907, USA (jingweihu@purdue.edu).}, \  \  
	    Ruiwen Shu\footnote{Department of Mathematics, University of Wisconsin-Madison, Madison, WI 53706, USA (rshu2@wisc.edu).}, \  \  
            and \ Xiangxiong Zhang\footnote{Department of Mathematics, Purdue University, West Lafayette, IN 47907, USA (zhan1966@purdue.edu).}}      
	\maketitle

\begin{abstract}
We develop a family of second-order implicit-explicit (IMEX) schemes for the stiff BGK kinetic equation. The method is asymptotic-preserving (can capture the Euler limit without numerically resolving the small Knudsen number) as well as positivity-preserving --- a feature that is not possessed by any of the existing second or high order IMEX schemes. The method is based on the usual IMEX Runge-Kutta framework plus a key correction step utilizing the special structure of the BGK operator. Formal analysis is presented to demonstrate the property of the method and is supported by various numerical results. Moreover, we show that the method satisfies an entropy-decay property when coupled with suitable spatial discretizations. Additionally, we discuss the generalization of the method to some hyperbolic relaxation system and provide a strategy to extend the method to third order.
\end{abstract}

{\small 
{\bf Key words.}  Stiff kinetic equation, BGK model, compressible Euler equations, implicit-explicit Runge-Kutta (IMEX-RK) scheme, asymptotic-preserving scheme, positivity-preserving scheme.

{\bf AMS subject classifications.}  82C40, 65L04, 35Q31, 65L06.
}


\section{Introduction}
	
Kinetic equations describe the non-equilibrium dynamics of a gas or any system comprised of a large number of particles. Compared to macroscopic fluid/continuum equations, they provide information at the mesoscopic scale using a probability density function (PDF). Kinetic equations often contain complicated integral operators modeling particle collisions (for example, the Boltzmann equation \cite{Cercignani, Villani02}). To simplify the analysis and computation, the so-called Bhatnagar-Gross-Krook (BGK) model \cite{BGK54}, or its variants, has been widely used in many disciplines of science and engineering (cf. \cite{Cercignani00, Jungel, NPT}). After nondimensionalization, the equation reads
\begin{equation} \label{BGK1}
\partial_t f+v\cdot \nabla_x f=\frac{1}{\varepsilon}Q(f), \quad t\geq 0, \quad v\in \mathbb{R}^{d_v}, \quad x\in \Omega\subset \mathbb{R}^{d_x},
\end{equation}
where $f=f(t,x,v)$ is the one-particle PDF ($t$ is time, $x$ is space, and $v$ is velocity). $\varepsilon$ is the Knudsen number which is the ratio of the mean free path and typical length scale. The collision operator $Q$ is a relaxation type:
\begin{equation} \label{BGK2}
Q(f)=\tau_f (M[f]-f),
\end{equation}
here $M$ is the Maxwellian, or local equilibrium, defined as
\begin{equation}
M[f]=\frac{\rho}{(2\pi T)^{\frac{d_v}{2}}}\exp\left(-\frac{|v-u|^2}{2T}\right),
\end{equation}
where $\rho$, $u$ and $T$ are density, bulk velocity, and temperature given by the moments of $f$:
\begin{equation}
\rho = \int_{\mathbb{R}^{d_v}}f\,\rd{v}, \quad u=\frac{1}{\rho}\int_{\mathbb{R}^{d_v}}f v \,\rd{v}, \quad T=\frac{1}{d_v\rho}\int_{\mathbb{R}^{d_v}} f |v-u|^2\,\rd{v}.
\end{equation}
Finally $\tau_f$ is some positive function that depends only on the macroscopic quantities such as $\rho$ and $T$.

It can be easily shown that the BGK operator (\ref{BGK2}) satisfies similar properties as the full Boltzmann collision operator:
\begin{itemize}
\item conservation:
\begin{equation} \label{conservation}
\int_{\mathbb{R}^{d_v}}Q(f)\phi(v)\,\rd{v} = 0, \quad \phi(v)=(1,v,|v|^2/2)^T;
\end{equation}
\item {\it H}-theorem:
\begin{equation}
\int_{\mathbb{R}^{d_v}}Q(f)\ln f\,\rd{v}\leq 0.
\end{equation}
\end{itemize}
Moreover, one can derive the compressible Euler equations as the leading order asymptotics of the BGK model \cite{BGL91}. A simple way to see this is to let $\varepsilon\rightarrow 0$ in (\ref{BGK1}), then formally $f\rightarrow M[f]$. On the other hand, taking the moments $ \langle \cdot \,\phi  \rangle := \int_{\mathbb{R}^{d_v}}\cdot \, \phi(v) \,\rd{v}$ on both sides of (\ref{BGK1}), one obtains (using (\ref{conservation})):
\begin{equation} \label{moments}
\partial_t \langle f\phi\rangle+\nabla_x \cdot  \langle f v \phi\rangle=0.
\end{equation}
Replacing $f$ by $M[f]$ in (\ref{moments}) thus yields the compressible Euler equations:
\begin{align} \label{Euler}
\left\{
\begin{array}{l}
\displaystyle \partial_t \rho+\nabla_x\cdot (\rho u)=0,\\[8pt]
\displaystyle \partial_t (\rho u)+\nabla_x\cdot (\rho u\otimes u +pI)=0,\\[8pt]
\displaystyle \partial_t E+\nabla_x\cdot ((E+p)u)=0,
\end{array}\right.
\end{align}
where $p=\rho T$ is the pressure and $E=\frac{d_v}{2}\rho T+\frac{1}{2}\rho u^2$ is the total energy.

When $\varepsilon$ is small (the system is close to the Euler limit), the right hand side of (\ref{BGK1}) presents strong stiffness. Hence explicit numerical schemes would impose very restrictive time step, i.e., $\Delta t$ has to be $O(\varepsilon)$. To remove this constraint, implicit-explicit (IMEX) Runge-Kutta (RK) schemes are natural and popular high order methods, in which the stiff collision part is solved implicitly and the non-stiff convection part is treated explicitly \cite{PP07, DP13} (for IMEX-RK schemes applied to other problems, see, e.g., \cite{ARS97, KC03, PR05, BPR13}). As a result, the time step can be chosen independently of $\varepsilon$ and is determined by the non-stiff part only. Furthermore, it can be shown that (see \cite{DP13} for details) for fixed $\Delta t$ and suitable initial conditions, as $\varepsilon \rightarrow 0$, the numerical scheme becomes an explicit RK scheme applied to the limiting Euler equations, i.e., asymptotic-preserving (AP) \cite{Jin_Rev, HJL17}.

AP property is a desired property for handling multiscale kinetic equations, for it guarantees to capture the correct fluid limit without resolving $\varepsilon$. Nevertheless, the implicit treatment of the collision term would usually cause the numerical solution to lose positivity, which is unphysical since $f$ is a PDF. Some kinetic equations, for instance, the full Boltzmann equation or the neutron transport equation, may not be super sensitive for negative function values since the collision operator only involves $f$ but not the Maxwellian $M[f]$. However, for the BGK equation, in order to define $M[f]$, one does require the macroscopic quantities (the moments of $f$) to be positive. Even small negative values of $f$ may lead to the result that some macroscopic quantities, especially the temperature, fail to be well-defined. 

We point out that the first-order IMEX scheme is an exception whose positivity can be easily achieved. Indeed, applying a forward-backward Euler scheme to (\ref{BGK1}) gives
\begin{equation} \label{IMEX1}
\frac{f^{n+1}-f^n}{\Delta t}+v\cdot \nabla_x f^n=\frac{\tau_{f^{n+1}}}{\varepsilon}(M[f^{n+1}]-f^{n+1}),
\end{equation}
which is equivalent to
\begin{equation}
f^{n+1}=\frac{\varepsilon}{\varepsilon+\Delta t \,\tau_{f^{n+1}}}(f^n-\Delta t\,v\cdot \nabla_x f^n)+\frac{\Delta t\, \tau_{f^{n+1}}}{\varepsilon+\Delta t\, \tau_{f^{n+1}}}M[f^{n+1}].
\end{equation}
Therefore, if $f^n$ is non-negative, $f^{n+1}$ is non-negative provided a positivity-preserving spatial discretization, for example \cite{ZS10, ZS11}, is used for the convection term. The situation becomes, however, highly non-trivial for the method beyond first order. The positivity of the IMEX-RK schemes is closely related to the monotonicity property (also known as strong stability \cite{GKS11}) of the method. In \cite{HR06, Higueras06}, it was found that for the Broadwell model (a hyperbolic relaxation system, see Section \ref{sec:hyper}), in order to preserve monotonicity or positivity, a sufficient condition requires the time step to be proportional to $\varepsilon$. This suggests that it may be very difficult to achieve the AP property, which requires $\Delta t$ to be independent of $\varepsilon$, and positivity simultaneously. Another evidence is, even for the spatially homogeneous problem (no convection term in (\ref{BGK1}) and the IMEX scheme reduces to a fully implicit one), the construction of implicit positive RK scheme is still not straightforward. In fact, as proved in \cite{GST01}, there does not exist unconditionally strong stability preserving (SSP) implicit RK schemes of order higher than one.

Recently, a class of second-order semi-implicit RK schemes was proposed for the ODEs with stiff damping term \cite{CCKW15}. The method is based on the modification of the explicit SSP-RK schemes and is shown to be well-balanced as well as sign-preserving. Later, a second-order AP discontinuous Galerkin scheme was introduced in \cite{HS17} for the Kerr-Debye model (a special relaxation system). The method is based on the modification of an IMEX-RK scheme and can preserve the positivity of one component of the solution vector. Inspired by these work, we propose to add a correction step to the standard IMEX-RK scheme. Due to the special structure of the BGK operator, this step can maintain both positivity and AP property. To insure second-order accuracy and overall positivity of the scheme, new conditions including both equalities and inequalities are derived for the RK coefficients. We then construct two IMEX-RK schemes fulfilling these conditions, one of type A and one of type ARS (two commonly used forms of IMEX-RK schemes, see Section \ref{subsec:stanIMEX} for definitions). 

To summarize, we develop a new IMEX time discretization method for the BGK equation (\ref{BGK1}) that has the following feature:
\begin{itemize}
\item the scheme is second-order accurate for $\varepsilon=O(1)$;
\item the scheme is AP: for fixed $\Delta t$, as $\varepsilon\rightarrow 0$, it reduces to a second-order scheme for the limiting Euler system (\ref{Euler});
\item the scheme is positivity-preserving: if $f^n\ge 0$, then $f^{n+1}\ge 0$.
\end{itemize}
Note that the AP property implies that the time step is independent of $\varepsilon$. In fact, the CFL condition for the new method can be made comparable to that of the first-order scheme (\ref{IMEX1}). We also provide a strategy to extend the method to third order. Furthermore, we show that the method satisfies an entropy-decay property when coupled with suitable spatial discretizations, and that it is possible to generalize it to some hyperbolic relaxation system which demands positivity.

The rest of this paper is organized as follows. In Section \ref{sec:IMEX-RK}, we introduce a general problem and present the procedure to construct the new IMEX schemes, where the main focus is to achieve second-order accuracy as well as positivity. In Section \ref{sec:BGK}, we apply the new method to the BGK equation and show that it is AP and entropy-decaying. To insure the fully discretized scheme is positivity-preserving and AP, special attention needs to be paid for spatial and velocity domain discretizations. These are described in Section \ref{subsec:spatial}. In Section \ref{sec:hyper}, we briefly discuss the generalization of the method to the hyperbolic relaxation system. In Section \ref{sec:num}, we perform several tests for the BGK equation and demonstrate numerically the properties of the proposed method. The paper is concluded in Section \ref{sec:conc}. Extension of the method to third order is provided in Appendix.

\section{New IMEX-RK schemes}
\label{sec:IMEX-RK}

We now present the procedure of constructing the new IMEX schemes that are both AP and positivity-preserving. Although we mainly consider the BGK equation (\ref{BGK1}), the framework is quite general and can be applied to other problems that share a similar structure. Therefore, we will start with a general setting and derive conditions for the RK coefficients to insure accuracy and positivity, and will get back to the BGK model in Section \ref{sec:BGK} when discussing the AP property as this latter part is problem dependent.

\subsection{A general problem and basic assumptions}
\label{subsec:general}

Consider an ODE of the form:
\begin{equation}\label{ode}
\frac{\rd{}}{\rd{t}}f = \mT (f)+\frac{1}{\varepsilon}\mQ (f),
\end{equation}
where $f=f(t)$ lies in some function space, $\mT$ and $\mQ$ are some operators, possibly nonlinear. The equation (\ref{ode}) may arise from semi-discretizations of time-dependent PDEs by the method of lines.

We assume the terms $\mT (f)$ and $\mQ (f)$ are positivity-preserving. To be precise, we assume
\begin{equation} \label{cflcond}
f \ge 0\, \Longrightarrow \, f+a \Delta t\,\mT (f) \ge 0,\quad \forall \ \text{constant} \ a \ \ \text{s.t.} \ 0\leq a\Delta t\leq \mC,
\end{equation} 
where $\mC$ is the Courant-Friedrichs-Lewy (CFL) type constraint for positivity. If $\mT =\mT_{\Delta x}$ is a discretized transport operator, then $\mC=\Delta t_{\text{FE}}$ with $\Delta t_{\text{FE}}$ being the maximum time step allowance such that the forward Euler scheme is positivity-preserving. For operator $\mQ$, we assume
\begin{equation}\label{poscond1}
g \ge 0,\quad f-b\mQ (f)=g \, \Longrightarrow  \, f \ge 0,\quad \forall \, \text{constant} \ b\ge 0.
\end{equation} 
We also assume a similar property for $\mQ '(g)\mQ (f)$ and $\mQ '(f)\mQ (f)$:
\begin{equation}\label{poscond2_1}
g,\ h \ge 0, \quad f+b\mQ '(g)\mQ (f)=h\,  \Longrightarrow \, f \ge 0,\quad \forall \  \text{constant} \ b\ge 0,
\end{equation} 
\begin{equation}\label{poscond2_2}
h\ge 0, \quad f+b\mQ '(f)\mQ (f)=h\,  \Longrightarrow \, f \ge 0,\quad \forall \  \text{constant} \ b\ge 0,
\end{equation} 
where $\mQ'(g)$ is the Fr\'{e}chet derivative of $\mQ$ at $g$, given by
\begin{equation}
\mQ'(g)f = \lim_{\delta\rightarrow 0}\frac{\mQ(g+\delta f)-\mQ(g)}{\delta}.
\end{equation}

Later in Section \ref{sec:BGK} and Section \ref{sec:hyper} we will verify that the BGK equation and the Broadwell model indeed satisfy the assumptions (\ref{cflcond})-(\ref{poscond2_2}).

\subsection{The standard IMEX-RK scheme}
\label{subsec:stanIMEX}

The standard IMEX-RK scheme applied to equation (\ref{ode}) reads \cite{PR05}:
\begin{equation}\label{stanscheme}
\begin{split}
& f^{(i)} = f^n + \Delta t\sum_{j=1}^{i-1}\tilde{a}_{ij}\mT (f^{(j)}) + \Delta t\sum_{j=1}^ia_{ij}\frac{1}{\varepsilon}\mQ (f^{(j)}),\quad i=1,\dots,\nu, \\
& f^{n+1} = f^n + \Delta t\sum_{i=1}^\nu \tilde{w}_i\mT (f^{(i)}) +  \Delta t\sum_{i=1}^\nu w_i\frac{1}{\varepsilon}\mQ (f^{(i)}).\\
\end{split}
\end{equation}
Here $\tilde{A}=(\tilde{a}_{ij})$, $\tilde{a}_{ij}=0$ for $j\geq i$ and $A=(a_{ij})$, $a_{ij}=0$ for $j> i$ are $\nu\times \nu$ matrices. Along with the vectors $\tilde{\bw}=(\tilde{w}_1,\dots,\tilde{w}_{\nu})^T$, $\bw=(w_1,\dots,w_{\nu})^T$, they can be represented by a double Butcher tableau:

\begin{equation} \label{tableau}
\centering
\begin{tabular}{c|c}
$\tilde{\bc}$ & $\tilde{A}$\\
\hline
& $\tilde{\bw}^T$
\end{tabular} \quad \quad
\begin{tabular}{c|c}
$\bc$ & $A$\\
\hline
& $\bw^T$
\end{tabular}
\end{equation}
where the vectors $\tilde{\bc}=(\tilde{c}_1,\dots,\tilde{c}_{\nu})^T$, $\bc=(c_1,\dots,c_{\nu})^T$ are defined as
\begin{equation}  \label{cc}
\tilde{c}_i=\sum_{j=1}^{i-1}\tilde{a}_{ij}, \quad c_i=\sum_{j=1}^{i}a_{ij}.
\end{equation}
The tableau (\ref{tableau}) must satisfy certain order conditions \cite{Hairer81, PR05}. 
According to the structure of matrix $A$ in the implicit tableau, one usually classifies the IMEX schemes into following categories \cite{BPR13, DP13}:
\begin{itemize}
\item {\bf Type A}: if the matrix $A$ is invertible.
\item {\bf Type CK}: if the matrix $A$ can be written as
\begin{equation}  \label{CK}
\left(
 \begin{matrix}
  0 & 0 \\
  \ba & \hat{A}
  \end{matrix}
\right),
\end{equation}
and the submatrix $\hat{A}\in \mathbb{R}^{(\nu-1)\times (\nu-1)}$ is invertible; in particular, if the vector $\ba=0$, $w_1=0$, the scheme is of type ARS.
\item If $a_{\nu i}=w_i$, $\tilde{a}_{\nu i}=\tilde{w}_i$, $i=1,\dots,\nu$, i.e., $f^{n+1}=f^{(\nu)}$, the scheme is said to be {\bf globally stiffly accurate (GSA)}.
\end{itemize}


\subsection{The new IMEX-RK scheme with correction}

We now propose to add a correction step to the standard IMEX scheme (\ref{stanscheme}):
\begin{align}
& f^{(i)} = f^n + \Delta t\sum_{j=1}^{i-1}\tilde{a}_{ij}\mT (f^{(j)}) + \Delta t\sum_{j=1}^ia_{ij}\frac{1}{\varepsilon}\mQ (f^{(j)}),\quad i=1,\dots,\nu,  \label{scheme1}\\
& \tilde{f}^{n+1} = f^n + \Delta t\sum_{i=1}^\nu \tilde{w}_i\mT (f^{(i)}) +  \Delta t\sum_{i=1}^\nu w_i\frac{1}{\varepsilon}\mQ (f^{(i)}), \label{scheme2}\\
& f^{n+1} = \tilde{f}^{n+1} - \alpha \Delta t^2\frac{1}{\varepsilon^2}\mQ '(f^*)\mQ (f^{n+1}), \label{scheme3}
\end{align}
where $f^*$ can be chosen as $f^n$, $f^{(i)}$, $\tilde{f}^{n+1}$ or $f^{n+1}$, as long as it is a first-order approximation to $f^n$: $f^*=f^n+O(\Delta t)$. The coefficients $a_{ij},\tilde{a}_{ij},w_i,\tilde{w}_i$, and $\alpha$ remain to be determined. 

\subsection{Second-order accuracy}
\label{subsec:ord}

Due to the extra correction step (\ref{scheme3}), the standard order conditions for the IMEX-RK schemes need to be modified. In this subsection, we analyze the order conditions of (\ref{scheme1})-(\ref{scheme3}), up to second order, in the regime $\varepsilon=O(1)$. Without loss of generality, we assume $\varepsilon=1$. 

First, (\ref{scheme1}) gives
\begin{equation} \label{fi}
f^{(i)} = f^n + \Delta t\, \tilde{c}_i\mT(f^n) + \Delta t\, c_i\mQ(f^n) + O(\Delta t^2),
\end{equation} 
where we used $f^{(j)} = f^n+O(\Delta t)$ and (\ref{cc}). Substituting it into (\ref{scheme2}) yields
\begin{equation} \label{ff1}
\begin{split}
\tilde{f}^{n+1} &=  f^n + \Delta t \sum_{i=1}^\nu \tilde{w}_i\mT(f^n + \Delta t\, \tilde{c}_i\mT(f^n) + \Delta t \,c_i\mQ (f^n))\\ &\quad + \Delta t \sum_{i=1}^\nu w_i\mQ (f^n + \Delta t\,\tilde{c}_i\mT(f^n) + \Delta t\, c_i\mQ (f^n)) +O(\Delta t^3) \\
&= f^n + \Delta t \sum_{i=1}^\nu \tilde{w}_i[\mT (f^n) + \mT '(f^n)(\Delta t\,\tilde{c}_i\mT(f^n) + \Delta  t\,c_i\mQ (f^n))] \\  &\quad +  \Delta t\sum_{i=1}^\nu w_i[\mQ (f^n) + \mQ '(f^n)(\Delta t\,\tilde{c}_i\mT(f^n) + \Delta t\, c_i\mQ (f^n))] +O(\Delta t^3) \\
&=  f^n + \Delta t \left[\left(\sum_{i=1}^\nu \tilde{w}_i\right)\mT (f^n)+\left(\sum_{i=1}^\nu w_i\right)\mQ (f^n)\right] + \Delta t^2\left[\left(\sum_{i=1}^\nu\tilde{w}_i\tilde{c}_i\right)\mT '(f^n)\mT (f^n) \right. \\& \left.\quad  + \left(\sum_{i=1}^\nu\tilde{w}_ic_i\right)\mT '(f^n)\mQ (f^n)  + \left(\sum_{i=1}^\nu w_i\tilde{c}_i\right)\mQ '(f^n)\mT (f^n) + \left(\sum_{i=1}^\nu w_ic_i\right)\mQ '(f^n)\mQ (f^n)\right] + O(\Delta t^3),
\end{split}
\end{equation}
where $\mT ',\mQ '$ are the Fr\'{e}chet derivatives of $\mT$ and $\mQ$. 
The last step (\ref{scheme3}) implies
\begin{equation} \label{ff2}
f^{n+1} = \tilde{f}^{n+1} - \alpha \Delta t^2 \mQ '(f^n)\mQ (f^n) + O(\Delta t^3).
\end{equation}
Combining (\ref{ff1}) and (\ref{ff2}), we have
\begin{equation} \label{fff}
\begin{split}
f^{n+1} &=  f^n + \Delta t \left[\left(\sum_{i=1}^\nu \tilde{w}_i\right)\mT (f^n)+\left(\sum_{i=1}^\nu w_i\right)\mQ (f^n)\right] + \Delta t^2\left[ \left(\sum_{i=1}^\nu\tilde{w}_i\tilde{c}_i\right)\mT '(f^n)\mT (f^n) \right. \\& \left.\quad  + \left(\sum_{i=1}^\nu\tilde{w}_ic_i\right)\mT '(f^n)\mQ (f^n)  + \left(\sum_{i=1}^\nu w_i\tilde{c}_i\right)\mQ '(f^n)\mT (f^n) + \left(\sum_{i=1}^\nu w_ic_i -\alpha\right)\mQ '(f^n)\mQ (f^n)\right] + O(\Delta t^3).
\end{split}
\end{equation}

On the other hand, if we Taylor expand the exact solution of (\ref{ode}) around time $t^n$, we have
\begin{equation}\label{fext}
\begin{split}
f^{n+1}_{\text{exact}} &=  f^n + \Delta t[\mT (f^n)+\mQ (f^n)] + \frac{1}{2}\Delta t^2[\mT '(f^n)\mT (f^n) + \mT '(f^n)\mQ (f^n) \\ &\quad  + \mQ '(f^n)\mT (f^n) + \mQ '(f^n)\mQ (f^n)] + O(\Delta t^3).
\end{split}
\end{equation}
Comparing (\ref{fff}) with (\ref{fext}), we obtain the following order conditions:
\begin{equation} \label{second order}
\begin{split}
& \sum_{i=1}^\nu \tilde{w}_i=\sum_{i=1}^\nu w_i=1,\\
& \sum_{i=1}^\nu\tilde{w}_i\tilde{c}_i = \sum_{i=1}^\nu\tilde{w}_ic_i = \sum_{i=1}^\nu w_i\tilde{c}_i = \sum_{i=1}^\nu w_ic_i - \alpha = \frac{1}{2}.
\end{split}
\end{equation}
Note that compared to the standard IMEX-RK order conditions \cite{PR05}, the only difference is the term containing $\alpha$.

\subsection{Positivity-preserving property}
\label{subsec:pos}

In this subsection, we analyze the positivity-preserving property of the IMEX-RK scheme (\ref{scheme1})-(\ref{scheme3}). To this end, we assume $f^n\ge 0$, and derive conditions to insure $f^{(i)}$, $\tilde{f}^{n+1}$ and $f^{n+1}$ all non-negative. 

First of all, we observe that if $f^n$, $f^{(i)}$, $\tilde{f}^{n+1}$ are all non-negative, then the last step (\ref{scheme3}) preserves positivity of the solution provided $\alpha\ge 0$. Indeed, (\ref{scheme3}) can be written as
\begin{equation}
f^{n+1}+\alpha \Delta t ^2\frac{1}{\varepsilon^2} \mathcal{Q}'(f^*)\mathcal{Q}(f^{n+1})=\tilde{f}^{n+1},
\end{equation}
then $f^{n+1}\ge 0$ follows directly from assumption (\ref{poscond2_1}) if $f^*=f^n, f^{(i)}, \tilde{f}^{n+1}$, and assumption (\ref{poscond2_2}) if $f^*=f^{n+1}$.

Next, we concentrate on the first two steps (\ref{scheme1})-(\ref{scheme2}). To simplify the derivation, we assume the IMEX-RK scheme is GSA, that is, $\tilde{f}^{n+1}=f^{(\nu)}$, and consider type A and type ARS schemes, respectively. Since the techniques we use here bear some similarities to the SSP schemes, we adopt the notation in~\cite{GKS11}.

\subsubsection{Type A and GSA schemes}

From (\ref{scheme1}), we know
\begin{equation}
\frac{1}{\varepsilon}\mQ (f^{(i)}) = \frac{1}{a_{ii}}\left(\frac{f^{(i)} - f^n}{\Delta t} - \sum_{j=1}^{i-1}\tilde{a}_{ij}\mT (f^{(j)}) - \sum_{j=1}^{i-1}a_{ij}\frac{1}{\varepsilon}\mQ (f^{(j)})\right),\quad i=1,\dots,\nu.
\end{equation}
Using this relation recursively, we obtain
\begin{equation}
\frac{1}{\varepsilon}\mQ (f^{(i)}) = \frac{1}{\Delta t}\sum_{j=1}^ib_{ij}(f^{(j)}-f^n) + \sum_{j=1}^{i-1}\tilde{b}_{ij}\mT (f^{(j)}),
\end{equation}
where 
\begin{equation}
b_{ii}:=\frac{1}{a_{ii}}, \quad b_{ij} := -\frac{1}{a_{ii}}\sum_{l=j}^{i-1}a_{il}b_{lj}, \quad \tilde{b}_{ij}: = \frac{1}{a_{ii}}\left(-\tilde{a}_{ij} - \sum_{l=j+1}^{i-1}a_{il}\tilde{b}_{lj}\right).
\end{equation}
Then (\ref{scheme1}) can be rewritten as
\begin{equation} \label{Afi}
\begin{split}
f^{(i)} &=  f^n + \Delta t\sum_{j=1}^{i-1}\tilde{a}_{ij}\mT (f^{(j)}) + \Delta t\sum_{j=1}^{i-1}a_{ij}\left[\frac{1}{\Delta t}\sum_{l=1}^jb_{jl}(f^{(l)}-f^n) + \sum_{l=1}^{j-1}\tilde{b}_{jl}\mT (f^{(l)})\right] + \Delta t\, a_{ii}\frac{1}{\varepsilon}\mQ (f^{(i)})\\
&= \left(1-\sum_{j=1}^{i-1}\sum_{l=j}^{i-1} a_{il}b_{lj}\right)f^n + \sum_{j=1}^{i-1}\left[\left(\sum_{l=j}^{i-1}a_{il}b_{lj}\right)f^{(j)} + \Delta t\left(\tilde{a}_{ij} + \sum_{l=j+1}^{i-1}a_{il}\tilde{b}_{lj}\right)\mT (f^{(j)})\right]   + \Delta t\, a_{ii}\frac{1}{\varepsilon}\mQ (f^{(i)})\\
&= c_{i0}f^n + \sum_{j=1}^{i-1}\left[c_{ij}f^{(j)} + \Delta t\, \tilde{c}_{ij}\mT (f^{(j)})\right]   + \Delta t \,a_{ii}\frac{1}{\varepsilon}\mQ (f^{(i)}),
\end{split}
\end{equation}
where
\begin{equation} 
\begin{split}
c_{i0}:=1-\sum_{j=1}^{i-1}\sum_{l=j}^{i-1} a_{il}b_{lj}, \quad c_{ij}:=\sum_{l=j}^{i-1}a_{il}b_{lj},\quad \tilde{c}_{ij}:=\tilde{a}_{ij} + \sum_{l=j+1}^{i-1}a_{il}\tilde{b}_{lj}.
\end{split}
\end{equation}
Thus
\begin{equation}
\begin{split}
f^{(i)}-\Delta t\, a_{ii}\frac{1}{\varepsilon} \mathcal{Q}(f^{(i)}) &= c_{i0}f^n + \sum_{j=1}^{i-1}\left[c_{ij}f^{(j)} + \Delta t\,\tilde{c}_{ij}\mT (f^{(j)})\right].
\end{split}
\end{equation}

Therefore, to make $f^{(i)}\ge 0$, using assumptions (\ref{cflcond}) and (\ref{poscond1}), it suffices to have
\begin{equation} \label{Apos}
\begin{split}
& a_{ii}> 0,\quad c_{i0} \ge 0,\quad i = 1,\dots,\nu,\\
& c_{ij}\ge 0,\quad \tilde{c}_{ij}\ge 0,\quad i = 2,\dots,\nu,\quad j = 1,\dots,i-1,
\end{split}
\end{equation}
and the CFL condition is given by 
\begin{equation} \label{Acfl}
\Delta t \le c_{\text{sch}}\mC,
\end{equation}
where $c_{\text{sch}}$ is the extra factor from the scheme, defined as
\begin{equation} \label{Acflcoef}
c_{\text{sch}} =\min_{\substack{i = 2,\dots,\nu\\ j = 1,\dots,i-1}}\left\{\frac{c_{ij}}{\tilde{c}_{ij}} \right\},
\end{equation}
and the ratio is understood as infinite if the denominator is zero.

\begin{remark} \label{remark1}
Requiring $a_{ii}>0$ rather than $a_{ii}\ge 0$ is to make sure the diagonal matrix $A$ in the implicit tableau (\ref{tableau}) is invertible so the scheme is of type A.
\end{remark}

\begin{remark} 
Note that $c_{i0}+\sum_{j=1}^{i-1}c_{ij}=1$. Therefore, written in (\ref{Afi}), the explicit part of the scheme is a convex combination of forward Euler steps, which is the so-called Shu-Osher form \cite{SO88}. This enables us to derive some nice properties of the scheme that rely on convexity such as entropy decay, see Section \ref{subsec:entropy}.
\end{remark}

\begin{remark} \label{remark2}
If $\mT =\mT_{\Delta x}$ is a discretized transport operator, the constraint $\tilde{c}_{ij}\ge 0$ in (\ref{Apos}) can be removed by using downwinding \cite{GKS11}. This allows more freedom in choosing coefficients and would possibly yield a better CFL condition. For simplicity, we do not consider this situation in the current work.
\end{remark}

We now write down explicitly the above positivity conditions for $\nu=3$ (the minimum stage required for RK coefficients to exist, see Appendix 1 for a proof). First, the double Butcher tableau (\ref{tableau}) looks like
\begin{equation} \label{typeA}
\centering
\begin{tabular}{c | c c c}
 & 0 & 0 & 0 \\
 & $\tilde{a}_{21}$ & 0 & 0\\
 & $\tilde{a}_{31}$ & $\tilde{a}_{32}$ & 0\\
\hline
& $\tilde{a}_{31}$ &  $\tilde{a}_{32}$ & 0
\end{tabular} \quad \quad
\begin{tabular}{c | c c c}
 & $a_{11}$ & 0 & 0 \\
 & $a_{21}$ & $a_{22}$ & 0\\
 & $a_{31}$ & $a_{32}$ & $a_{33}$\\
\hline
& $a_{31}$ &  $a_{32}$ & $a_{33}$
\end{tabular}\end{equation}
where the vectors $\tilde{\bc}$ and $\bc$ satisfying (\ref{cc}) are omitted. Then the positivity conditions (\ref{Apos}) reduce to
\begin{itemize}
\item for $i=1$,
\begin{equation}
\begin{split}
a_{11}>0, \quad c_{10}=1\ge 0,
\end{split}
\end{equation} 
\item for $i=2$,
\begin{equation}
\begin{split}
&  a_{22}> 0, \quad c_{20}=1-\frac{a_{21}}{a_{11}}\ge 0,\\
& c_{21}=\frac{a_{21}}{a_{11}} \ge 0, \quad \tilde{c}_{21}=\tilde{a}_{21}\ge 0,
\end{split}
\end{equation}
\item for $i=3$,
\begin{equation}
\begin{split}
&  a_{33} > 0, \quad c_{30}=1-\frac{a_{31}}{a_{11}} + \frac{a_{32}a_{21}}{a_{22}a_{11}} - \frac{a_{32}}{a_{22}} \ge 0, \\  
& c_{31}=\frac{a_{31}}{a_{11}} - \frac{a_{32}a_{21}}{a_{22}a_{11}} \ge 0,  \quad c_{32}=\frac{a_{32}}{a_{22}}\ge 0, \quad \tilde{c}_{31}=\tilde{a}_{31} - \frac{a_{32}\tilde{a}_{21}}{a_{22}}\ge 0, \quad \tilde{c}_{32}=\tilde{a}_{32}\ge 0.
\end{split}
\end{equation}
\end{itemize}

These conditions will be used later to construct the scheme in Section \ref{subsubsec:typeA}.

\subsubsection{Type ARS and GSA schemes}

The analysis for type ARS schemes is similar. Note that since $a_{11}=0$, $f^{(1)}=f^n$.

First we recursively derive
\begin{equation}
\frac{1}{\varepsilon}\mQ (f^{(i)}) = \frac{1}{\Delta t}\sum_{j=2}^ib_{ij}(f^{(j)}-f^n) + \sum_{j=1}^{i-1}\tilde{b}_{ij}\mT (f^{(j)}), \quad i=2,\dots, \nu,
\end{equation}
where 
\begin{equation}
b_{ii}:=\frac{1}{a_{ii}}, \quad b_{ij} := -\frac{1}{a_{ii}}\sum_{l=j}^{i-1}a_{il}b_{lj},\quad \tilde{b}_{ij} := \frac{1}{a_{ii}}\left(-\tilde{a}_{ij} - \sum_{l=j+1}^{i-1}a_{il}\tilde{b}_{lj}\right).
\end{equation}
Then (\ref{scheme1}) can be rewritten as
\begin{equation}
\begin{split}
f^{(i)} &=  \left[c_{i0}f^n + \Delta t\,\tilde{c}_{i0}\mT (f^n)\right]  + \sum_{j=2}^{i-1}\left[c_{ij}f^{(j)} + \Delta t\, \tilde{c}_{ij} \mT (f^{(j)})\right]+\Delta t \,a_{ii}\frac{1}{\varepsilon}\mathcal{Q}(f^{(i)}), 
\end{split}
\end{equation}
where
\begin{equation} 
\begin{split}
c_{i0}:=1-\sum_{j=2}^{i-1}\sum_{l=j}^{i-1} a_{il}b_{lj}, \quad \tilde{c}_{i0}:=\tilde{a}_{i1}+\sum_{j=2}^{i-1}a_{ij}\tilde{b}_{j1}, \quad c_{ij}:=\sum_{l=j}^{i-1}a_{il}b_{lj},\quad \tilde{c}_{ij}=\tilde{a}_{ij} + \sum_{l=j+1}^{i-1}a_{il}\tilde{b}_{lj}.
\end{split}
\end{equation}

Therefore, to make $f^{(i)}\ge 0$, using assumptions (\ref{cflcond}) and (\ref{poscond1}), it suffices to have
\begin{equation}  \label{ARSpos}
\begin{split}
&  a_{ii}>0,\quad c_{i0}\ge 0,\quad \tilde{c}_{i0}\ge 0,\quad i = 2,\dots,\nu,\\
& c_{ij} \ge 0,\quad \tilde{c}_{ij} \ge 0,\quad i = 3,\dots,\nu,\quad j = 2,\dots,i-1,
\end{split}
\end{equation}
and the CFL condition is given by 
\begin{equation} \label{ARScfl}
\Delta t \le c_{\text{sch}}\mC,
\end{equation}
where 
\begin{equation} \label{ARScflcoef}
c_{\text{sch}} =\min\left\{ \min_{\substack{i = 2,\dots,\nu}} \frac{c_{i0}}{\tilde{c}_{i0}}, \min_{\substack{i = 3,\dots,\nu\\ j = 2,\dots,i-1}}\frac{c_{ij}}{\tilde{c}_{ij} }\right\},
\end{equation}
and the ratio is understood as infinite if the denominator is zero. Note that similar considerations as pointed out in Remarks \ref{remark1}-\ref{remark2} apply here as well.

We now write down explicitly the above positivity conditions for $\nu=4$ (the minimum stage required for RK coefficients to exist, see Appendix 1 for a proof). First, the double Butcher tableau (\ref{tableau}) looks like
\begin{equation} \label{typeARS}
\centering
\begin{tabular}{c | c c c c}
 & 0 & 0 & 0 & 0\\
 & $\tilde{a}_{21}$ & 0 & 0 & 0\\
 & $\tilde{a}_{31}$ & $\tilde{a}_{32}$ & 0 & 0 \\
 & $\tilde{a}_{41}$ & $\tilde{a}_{42}$ & $\tilde{a}_{43}$ & 0 \\
\hline
& $\tilde{a}_{41}$ &  $\tilde{a}_{42}$ & $\tilde{a}_{43}$ & 0
\end{tabular} \quad \quad
\begin{tabular}{c | c c c c}
 & 0 & 0 & 0  & 0 \\
 & 0 & $a_{22}$ & 0 & 0\\
 & 0 & $a_{32}$ & $a_{33}$ & 0 \\
 & 0 & $a_{42}$ & $a_{43}$ & $a_{44}$\\
\hline
& 0 &  $a_{42}$ & $a_{43}$ & $a_{44}$
\end{tabular}\end{equation}
where the vectors $\tilde{\bc}$ and $\bc$ satisfying (\ref{cc}) are omitted. Then the positivity conditions (\ref{ARSpos}) reduce to 
\begin{itemize}
\item for $i=2$,
\begin{equation}
\begin{split}
&  a_{22}>0, \quad c_{20}=1\ge 0, \quad \tilde{c}_{20}=\tilde{a}_{21}\ge 0,
\end{split}
\end{equation}
\item for $i=3$,
\begin{equation}
\begin{split}
&  a_{33}>0, \quad c_{30}=1-\frac{a_{32}}{a_{22}}\ge 0, \quad \tilde{c}_{30}=\tilde{a}_{31}-\frac{a_{32}\tilde{a}_{21}}{a_{22}} \ge 0,\\
& c_{32}=\frac{a_{32}}{a_{22}} \ge 0, \quad \tilde{c}_{32}=\tilde{a}_{32}\ge 0,
\end{split}
\end{equation}
\item for $i=4$,
\begin{equation}
\begin{split}
& a_{44}>0, \quad c_{40}=1-\frac{a_{42}}{a_{22}}+\frac{a_{43}a_{32}}{a_{33}a_{22}}-\frac{a_{43}}{a_{33}} \ge 0, \quad \tilde{c}_{40}=\tilde{a}_{41} - \frac{a_{42}\tilde{a}_{21}}{a_{22}} - \frac{a_{43}\tilde{a}_{31}}{a_{33}} + \frac{a_{43}a_{32}\tilde{a}_{21}}{a_{33}a_{22}} \ge  0, \\
&  c_{42}=\frac{a_{42}}{a_{22}} - \frac{a_{43}a_{32}}{a_{33}a_{22}} \ge 0, \quad c_{43}= \frac{a_{43}}{a_{33}} \ge 0,  \quad  \tilde{c}_{42}=\tilde{a}_{42} - \frac{a_{43}\tilde{a}_{32}}{a_{33}} \ge 0, \quad \tilde{c}_{43}=\tilde{a}_{43} \ge 0.
\end{split}
\end{equation}
\end{itemize}

These conditions will be used later to construct the scheme in Section \ref{subsubsec:typeARS}.

\begin{remark}
Although the ARS scheme needs at least four stages to achieve the second order, it gives more freedom in choosing the parameters. As a result, one can obtain simpler coefficients and larger CFL number than type A scheme, see Section \ref{subsubsec:typeA} and Section \ref{subsubsec:typeARS}.
\end{remark}

\subsection{Combining order conditions and positivity conditions}
\label{subsec:IMEX}

Combining the results from Sections \ref{subsec:ord} and \ref{subsec:pos}, we conclude that as long as one can find the RK coefficients such that they satisfy the order conditions (\ref{second order}), positivity conditions (\ref{Apos}) (resp. (\ref{ARSpos})), and $\alpha\geq 0$, the resulting scheme (\ref{scheme1})-(\ref{scheme3}) would be both second-order accurate and positivity-preserving. It turns out that such sets of coefficients are very easy to find. Below we give two IMEX schemes, one of type A and GSA with $\nu=3$ and one of type ARS and GSA with $\nu=4$. These coefficients are searched to yield a relatively large CFL constant $c_{\text{sch}}$, but we do not claim their optimality.

\subsubsection{A second-order positivity-preserving type A and GSA scheme}
\label{subsubsec:typeA}

A type A and GSA scheme of form (\ref{typeA}) (numbers are truncated to $14$ digits):
\begin{align}
&\tilde{a}_{21}=0.73695027152854, \nonumber\\
&\tilde{a}_{31}=0.32152816910844, \quad \tilde{a}_{32}=0.67847183089156, \nonumber\\
& a_{11}=0.62863517121833, \nonumber\\
& a_{21}=0.24310046553707, \quad a_{22}=0.19593925696632, \nonumber\\
& a_{31}=0.48036510509894, \quad a_{32}=0.074643281386981, \quad a_{33}=0.44499161351408.\nonumber
\end{align}
$\alpha$ in the correction step (\ref{scheme3}) and the CFL constant (\ref{Acflcoef}) are given by
\begin{equation*}
\alpha=0.27973737915215,  \quad c_{\text{sch}}=0.52474575236975.
\end{equation*}

\subsubsection{A second-order positivity-preserving type ARS and GSA scheme}
\label{subsubsec:typeARS}

A type ARS and GSA scheme of form (\ref{typeARS}) (numbers are exact):
\begin{align}
&\tilde{a}_{21}=0, \nonumber\\
&\tilde{a}_{31}=1.0, \quad \tilde{a}_{32}=0, \nonumber\\
&\tilde{a}_{41}=0.5, \quad \tilde{a}_{42}=0, \quad \tilde{a}_{43}=0.5, \nonumber\\
& a_{22}=1.6, \nonumber\\
& a_{32}=0.3, \quad a_{33}=0.7, \nonumber\\
& a_{42}=0.5, \quad a_{43}=0.3, \quad a_{44}=0.2.\nonumber
\end{align}
$\alpha$ in the correction step (\ref{scheme3}) and the CFL constant (\ref{ARScflcoef}) are given by
\begin{equation*}
\alpha=0.8,  \quad c_{\text{sch}}=0.8125.
\end{equation*}

\begin{remark}
For simplicity, we only give examples for second-order method. Following a similar procedure in Section \ref{subsec:ord}, it is not difficult to derive order conditions for third-order method (see Appendix 2). This, combined with the positivity conditions in Section \ref{subsec:pos}, would yield a third-order positivity-preserving scheme. 
\end{remark}


\subsection{Absolute stability}

In this subsection, we analyze the absolute stability of the proposed IMEX scheme. We consider the linear ODE
\begin{equation}
\frac{\rd{f}}{\rd{t}} = \lambda_1 f + \lambda_2 f,\quad \lambda_1\in\mathbb{C},\, \lambda_2<0,
\end{equation}
and solve it by scheme (\ref{scheme1})-(\ref{scheme3}), i.e.,
\begin{equation}
\begin{split}
& f^{(i)} = f^n + \Delta t\sum_{j=1}^{i-1}\tilde{a}_{ij}\lambda_1 f^{(j)} + \Delta t\sum_{j=1}^ia_{ij}\lambda_2 f^{(j)},\quad i=1,\dots,\nu, \\
& \tilde{f}^{n+1} = f^n + \Delta t\sum_{i=1}^\nu \tilde{w}_i\lambda_1 f^{(i)} +  \Delta t\sum_{i=1}^\nu w_i \lambda_2 f^{(i)}, \\
& f^{n+1} = \tilde{f}^{n+1} - \alpha \Delta t^2\lambda_2^2 f^{n+1}.
\end{split}
\end{equation}
Define $z_i=\lambda_i \Delta t,\,i=1,2$, then one can write $f^{n+1}=P(z_1,z_2)f^n$, where $P(z_1,z_2)$ is the amplification factor of the scheme. The absolute stability region of the scheme is defined as \cite{Leveque07}:
\begin{equation} \label{stability}
\mS = \{(z_1,z_2): |P(z_1,z_2)|\le 1\}.
\end{equation}

In Figure 1, we illustrate the stability regions of the two schemes given in Section 2.6, by denoting $z_1 = x + iy$ and plotting the boundary of the region $\mS \cap \{z_2 = C\}$ in the $x$-$y$ plane for different values of $C\leq 0$. As we can see in Figure 1, for both schemes, as $C$ becomes smaller, the region $\mS \cap \{z_2 = C\}$ is strictly increasing. Notice that $\mS \cap \{z_2 = 0\}$ is the stability region of the explicit RK scheme. Thus this suggests that, if a time step satisfies the absolute stability for the explicit part of the IMEX scheme, then it also satisfies the absolute stability for the whole IMEX scheme for any $z_2<0$. 
\begin{figure}
\begin{center}
	\includegraphics[width=2.86in]{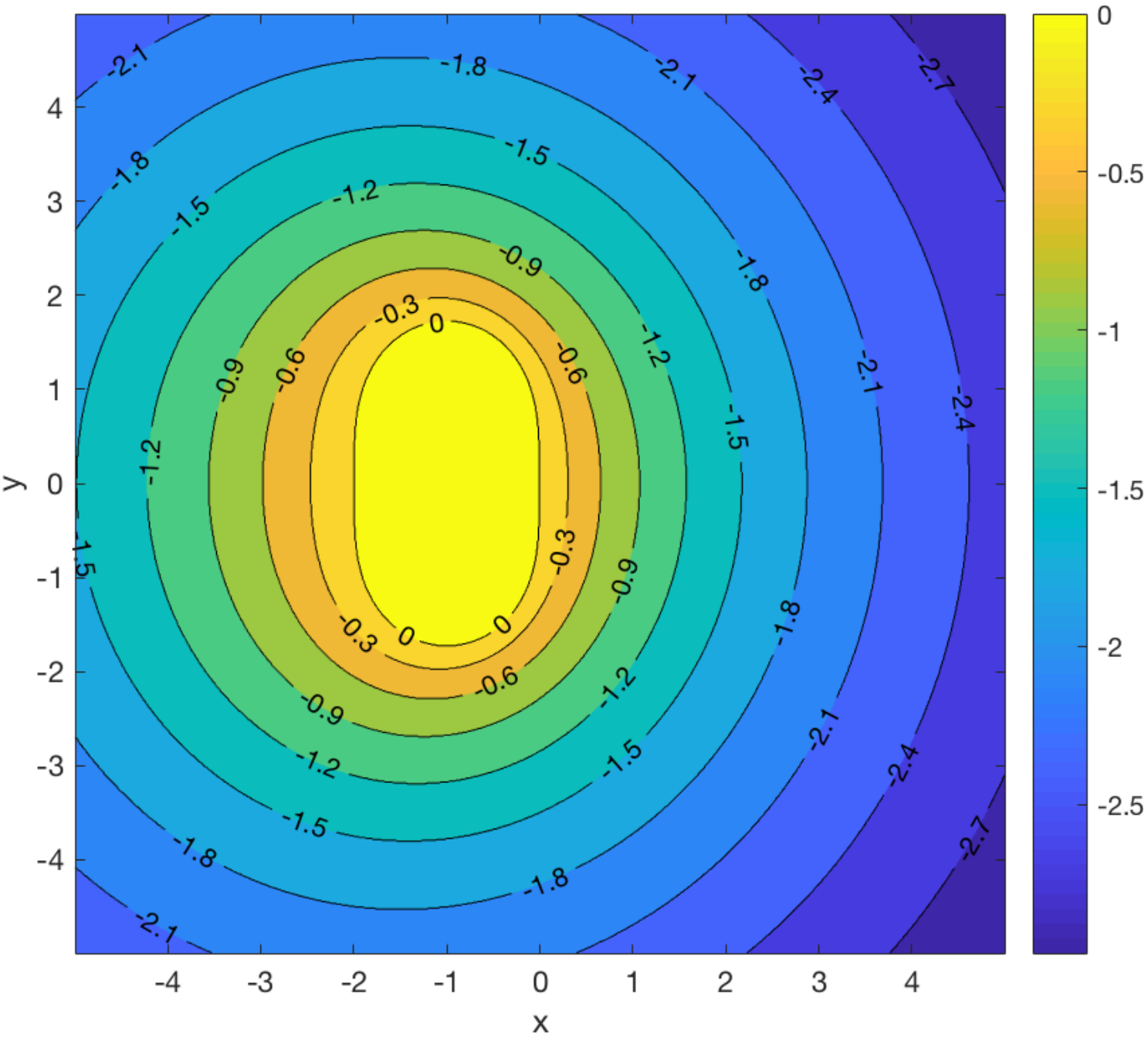}
	\includegraphics[width=2.86in]{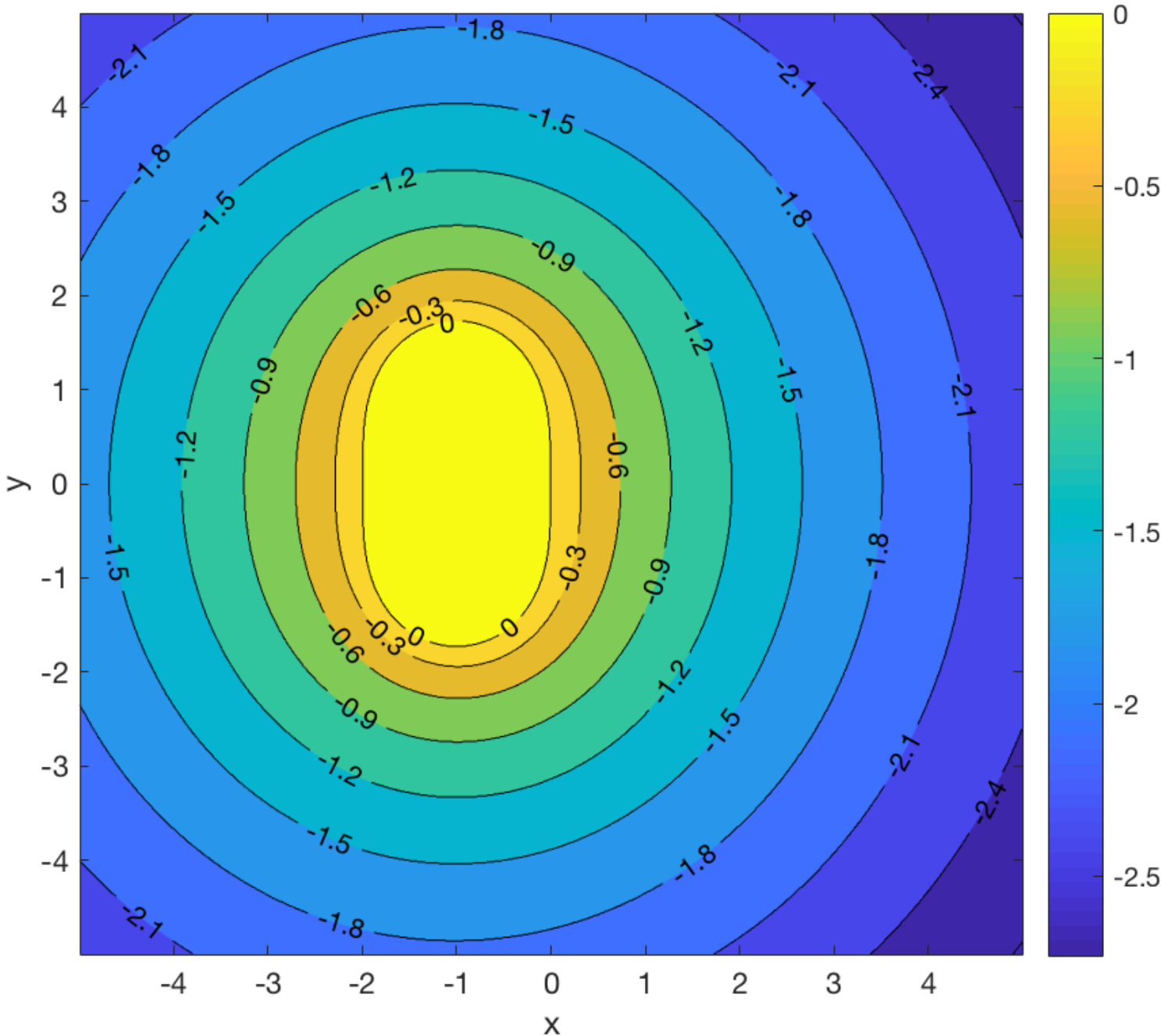}
   \caption{Boundary of the stability region $\mS \cap \{z_2 = C\}$ for different values of $C\leq 0$. Here different color or number corresponds to different value of $z_2$. Left: the type A scheme given in Section \ref{subsubsec:typeA}; Right: the type ARS scheme given in Section \ref{subsubsec:typeARS}.}
	\label{fig:stability}
\end{center}	
\end{figure}

\section{Application to the BGK equation}
\label{sec:BGK}

We now apply the previously derived general framework to the BGK equation (\ref{BGK1}). The convection operator $-v\cdot \nabla_x$ and the collision operator $Q$ correspond, respectively, to the operators $\mT$ and $\mQ$ in the general setting (\ref{ode}). We have the following:

\begin{proposition}
The operators $\mT(f)=-v\cdot \nabla_xf$ and $\mQ(f)=\tau_f(M[f]-f)$ satisfy the assumptions (\ref{cflcond})-(\ref{poscond2_2}).
\end{proposition}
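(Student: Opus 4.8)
I would organize the proof around two structural properties of the BGK collision operator and then reduce all four assumptions to one elementary computation. The first property is that the Maxwellian $M[f]$ and the relaxation rate $\tau_f$ depend on $f$ only through the conserved moments $\langle f\phi\rangle$, $\phi=(1,v,|v|^2/2)^T$ (since $\rho,u,T$, hence $p$ and $\tau$, are explicit functions of $\langle f\phi\rangle$). The second is conservation (\ref{conservation}), i.e. $\langle\mathcal{Q}(f)\phi\rangle=0$, equivalently $\langle M[f]\phi\rangle=\langle f\phi\rangle$. Since $\mathcal{Q}$ acts only in $v$, the whole argument is carried out pointwise in $x$; in particular, for each fixed $x$, $\tau_f(x)$ is a positive \emph{constant} with respect to $v$. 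Throughout one restricts to states with positive density and temperature so that $M[\cdot]$ and $\tau_\cdot$ are defined (this can be recovered a posteriori from the moment identities below).

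First I would prove (\ref{poscond1}). Taking $\langle\,\cdot\,\phi\rangle$ of $f-b\mathcal{Q}(f)=g$ and using $\langle\mathcal{Q}(f)\phi\rangle=0$ gives $\langle f\phi\rangle=\langle g\phi\rangle$, so $\rho_f=\rho_g$, $u_f=u_g$, $T_f=T_g$, whence $M[f]=M[g]$ and $\tau_f=\tau_g$. Substituting back, $(1+b\tau_g)f=g+b\tau_g M[g]$, i.e.
\[
f=\frac{g+b\,\tau_g\,M[g]}{1+b\,\tau_g}\ \ge\ 0,
\]
since $g\ge0$, $M[g]\ge0$, $\tau_g>0$, $b\ge0$. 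Crucially this argument uses only that $b\ge0$ and is independent of $v$, which is what lets us recycle it.

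The key step is the Fréchet derivative. If $\psi$ has vanishing conserved moments, $\langle\psi\phi\rangle=0$, then $g+\delta\psi$ has the \emph{same} moments as $g$ for every $\delta$, hence $M[g+\delta\psi]=M[g]$ and $\tau_{g+\delta\psi}=\tau_g$, so
\[
\mathcal{Q}(g+\delta\psi)=\tau_g\big(M[g]-g-\delta\psi\big)=\mathcal{Q}(g)-\delta\,\tau_g\psi,
\]
which is affine in $\delta$ and yields $\mathcal{Q}'(g)\psi=-\tau_g\psi$. Applying this with $\psi=\mathcal{Q}(f)$ (which has zero moments by conservation) gives $\mathcal{Q}'(g)\mathcal{Q}(f)=-\tau_g\,\mathcal{Q}(f)$ and $\mathcal{Q}'(f)\mathcal{Q}(f)=-\tau_f\,\mathcal{Q}(f)$. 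Hence (\ref{poscond2_1}) becomes $f-(b\tau_g)\mathcal{Q}(f)=h$ and (\ref{poscond2_2}) becomes $f-(b\tau_f)\mathcal{Q}(f)=h$; in both cases the coefficient $b\tau_g(x)$, resp. $b\tau_f(x)$, is nonnegative and $v$-independent, so the argument for (\ref{poscond1}) applies verbatim (pointwise in $x$, after first taking moments in $v$ to identify the Maxwellian and relaxation rate of $f$ with those of $h$), producing an explicit convex combination of $h\ge0$ and $M[h]\ge0$ for $f$, hence $f\ge0$.

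Finally, (\ref{cflcond}) concerns only $\mathcal{T}(f)=-v\cdot\nabla_x f$ and is exactly the statement that one forward-Euler step of the transport part preserves positivity under a CFL restriction; with the positivity-preserving spatial discretization $\mathcal{T}=\mathcal{T}_{\Delta x}$ adopted in Section \ref{subsec:spatial} this holds with $\mathcal{C}=\Delta t_{\mathrm{FE}}$ by construction. The main obstacle I anticipate is the Fréchet-derivative computation in the third paragraph — recognizing and justifying that $\mathcal{Q}'(g)$ restricts to multiplication by $-\tau_g$ on the subspace of moment-free perturbations; once that collapse is established, conservation plus elementary algebra closes each of (\ref{poscond1})--(\ref{poscond2_2}). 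A secondary point requiring care is the standing well-posedness assumption that $M[f],\tau_f$ make sense along the way, which should be either built into the function space or extracted from $\langle f\phi\rangle=\langle g\phi\rangle$ (resp. $\langle f\phi\rangle=\langle h\phi\rangle$).
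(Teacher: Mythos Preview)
Your proposal is correct and follows essentially the same route as the paper: the same moment identity $\langle f\phi\rangle=\langle g\phi\rangle$ to get $M[f]=M[g]$, $\tau_f=\tau_g$ and the explicit convex combination $f=(g+b\tau_g M[g])/(1+b\tau_g)$ for (\ref{poscond1}); the same collapse $\mQ'(g)\mQ(f)=-\tau_g\mQ(f)$ (you phrase it slightly more generally as $\mQ'(g)\psi=-\tau_g\psi$ on moment-free $\psi$, but this is the identical computation); and the same reduction of (\ref{poscond2_1})--(\ref{poscond2_2}) to (\ref{poscond1}), including the observation $\tau_f=\tau_h$ via moments in the latter. The only cosmetic difference is that the paper \emph{defines} $f$ first and then checks it solves $f-b\mQ(f)=g$, which sidesteps the well-posedness caveat you flag in your last sentence; your version, which assumes a solution and derives its form, is equally valid for the implication as stated.
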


\begin{proof}
First of all, the operator $\mT(f)$ can satisfy the assumption (\ref{cflcond}) if a positivity-preserving spatial discretization is used (see Section \ref{subsec:spatial}).



To verify (\ref{poscond1}), for $g\geq 0$ and constant $b\geq 0$, we first define 
\begin{equation} \label{pf}
f=\frac{b\tau_g M[g]+g}{1+b\tau_g},
\end{equation}
then $f\geq 0$. Taking the moments $ \langle \cdot \,\phi  \rangle$ on both sides of (\ref{pf}) gives $\langle f\phi \rangle =\langle g \phi\rangle$ since $\langle g \phi \rangle =\langle M[g]\phi \rangle$. Therefore, $M[f]=M[g]$ and $\tau_{f}=\tau_g$, so
\begin{equation} 
f=\frac{b\tau_{f} M[f]+g}{1+b\tau_{f}} \, \Longleftrightarrow \, f-b\tau_{f}(M[f]-f)=g \,  \Longleftrightarrow \, f-b\mQ (f)=g,
\end{equation}
i.e., such defined $f\geq 0$ satisfies the assumption (\ref{poscond1}).

We now compute $\mQ'(g)\mQ(f)$:
\begin{equation}
\mQ'(g)\mQ(f) = \lim_{\delta\rightarrow 0}\frac{\mQ(g+\delta \mQ(f))-\mQ(g)}{\delta}.
\end{equation}
Since $\langle (g+\delta \mQ(f))\phi\rangle=\langle (g+\delta \tau_f (M[f]-f))\phi\rangle=\langle g\phi\rangle$, hence $M[g+\delta \mathcal{Q}(f)]=M[g]$, so 
\begin{equation}
\mQ(g+\delta \mQ(f))-\mQ(g)=\tau_g(M[g]-g-\delta \mQ(f))-\tau_g(M[g]-g)=-\tau_g\delta \mQ(f).
\end{equation}
Hence 
\begin{equation} \label{specialBGK}
\mQ'(g)\mQ(f) =-\tau_g\mQ(f).
\end{equation}
Then
\begin{equation}
f+b\mQ'(g)\mQ(f)=h  \, \Longleftrightarrow \, f- b\tau_g\mQ(f)=h.
\end{equation}
If $g\ge 0$, then $\tau_g> 0$. Thus (\ref{poscond2_1}) follows from (\ref{poscond1}). To verify (\ref{poscond2_2}), note that
\begin{equation}
f+b\mQ'(f)\mQ(f)=h  \, \Longleftrightarrow \, f- b\tau_f\mQ(f)=h,
\end{equation}
from which we know $\langle f\phi \rangle =\langle h \phi \rangle$. If $h\ge 0$, then $\tau_f=\tau_h>0$. Thus (\ref{poscond2_2}) follows again from (\ref{poscond1}).
\end{proof}

Therefore, applying the scheme (\ref{scheme1})-(\ref{scheme3}) to the BGK equation, we get a second-order, positivity-preserving method:
\begin{equation} \label{BGKscheme}
\begin{split}
& f^{(i)} = f^n - \Delta t\sum_{j=1}^{i-1}\tilde{a}_{ij}v\cdot \nabla_x f^{(j)} + \Delta t\sum_{j=1}^ia_{ij}\frac{\tau_{f^{(j)}}}{\varepsilon}(M[f^{(j)}]-f^{(j)}),\quad i=1,\dots,\nu,\\
& f^{n+1} = f^{(\nu)} + \alpha \Delta t^2\frac{\tau_{f^*}}{\varepsilon^2}(M[f^{n+1}]-f^{n+1}), 
\end{split}
\end{equation}
where $f^*$ can be taken as $f^n$, any $f^{(i)}$ or $f^{n+1}$, and the coefficients $\tilde{a}_{ij}$, $a_{ij}$, $\alpha$ and the CFL constant $c_{\text{sch}}$ are given in Section \ref{subsec:IMEX}. Note that we have restricted to GSA schemes to get positivity, so there is no middle step $\tilde{f}^{n+1}$. Furthermore, due to the special structure (\ref{specialBGK}) of the BGK operator, the implementation of the correction step is just as easy as solving the collision operator implicitly.

\begin{remark}
The scheme (\ref{BGKscheme}) appears implicit since at every stage $i$ one needs to compute $\tau_{f^{(i)}}$, $M[f^{(i)}]$ first in order to evaluate $f^{(i)}$ (also for the last step). This can be achieved by taking the moments $\langle \cdot\, \phi \rangle$ on both sides of the scheme:
\begin{equation}  \label{moments1}
\begin{split}
& \langle f^{(i)} \phi\rangle =\langle f^n \phi \rangle - \Delta t\sum_{j=1}^{i-1} \tilde{a}_{ij} \nabla_x \cdot \langle  f^{(j)}  v \phi \rangle,\quad i=1,\dots,\nu,\\
& \langle f^{n+1} \phi \rangle =\langle f^{(\nu)} \phi \rangle.
\end{split}
\end{equation}
Hence one can obtain the macroscopic quantities $\rho$, $u$, $T$ at stage $i$ first, which will define $\tau_{f^{(i)}}$ and $M[f^{(i)}]$ (the last step is treated similarly). This idea has been used in several papers to solve the BGK equation implicitly \cite{CP91, PP07, FJ10, DP13}.
\end{remark}

\subsection{Asymptotic-preserving (AP) property}

There remains to prove the scheme (\ref{BGKscheme}) is AP. To this end, we discuss type A schemes and type ARS schemes separately. We will prove the AP property in a similar way as~\cite{DP13}.

\begin{proposition} \label{APtypeA}
If the IMEX scheme (\ref{BGKscheme}) is of type A and GSA, it is AP: for fixed $\Delta t$, in the limit $\varepsilon \rightarrow 0$, the scheme becomes a second-order explicit RK scheme applied to the limiting Euler system (\ref{Euler}).
\end{proposition}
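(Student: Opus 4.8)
The plan is to show that, as $\varepsilon\to 0$, the scheme (\ref{BGKscheme}) collapses onto an explicit second-order Runge-Kutta scheme for the Euler system (\ref{Euler}), following the strategy of \cite{DP13}. The argument proceeds in two parts: first, establishing that the stages become Maxwellians consistent with the moment equations, and second, identifying the resulting discrete scheme for the moments.

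First I would exploit the type A structure. Since $A$ is invertible, I can invert the relation in the $i$-th stage of (\ref{BGKscheme}) to solve for $\tau_{f^{(j)}}(M[f^{(j)}]-f^{(j)})$ in terms of the stage values and the transport terms, exactly as the $b_{ij}, \tilde b_{ij}$ recursion in Section \ref{subsec:pos}. Multiplying through by $\varepsilon$ and letting $\varepsilon\to 0$ (with $\Delta t$ fixed) forces $M[f^{(i)}]-f^{(i)}\to 0$ for every $i$, hence $f^{(i)}\to M^{(i)}$ for some Maxwellian $M^{(i)}$; the same holds for $f^{n+1}$ from the last (correction) step, where the $\varepsilon^{-2}$ factor makes the conclusion even more immediate. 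Here one must be slightly careful: the correction step has $\tau_{f^*}\Delta t^2/\varepsilon^2$ and $f^*$ is itself $\varepsilon$-dependent, so I would note that $\tau_{f^*}$ stays bounded away from zero because its moments are controlled through (\ref{moments1}).

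Next I would take moments $\langle\cdot\,\phi\rangle$ of the scheme. By the conservation property (\ref{conservation}) all collision and correction contributions vanish, leaving precisely (\ref{moments1}): the stage moments $\langle f^{(i)}\phi\rangle$ and the update $\langle f^{n+1}\phi\rangle$ satisfy an explicit RK scheme (with tableau $\tilde A,\tilde{\bw}$, which is GSA) for the flux $\nabla_x\cdot\langle f\,v\phi\rangle$. Passing to the limit and substituting $f^{(i)}=M^{(i)}$, the flux $\langle M^{(i)}v\phi\rangle$ is exactly the Euler flux evaluated at the macroscopic state $(\rho^{(i)},u^{(i)},T^{(i)})$ determined by $\langle f^{(i)}\phi\rangle$. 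Hence the limiting scheme is the explicit RK scheme with tableau $(\tilde A,\tilde{\bw},\tilde{\bc})$ applied to (\ref{Euler}); its second-order accuracy follows from the order conditions (\ref{second order}), whose explicit-part equations $\sum\tilde w_i=1$, $\sum\tilde w_i\tilde c_i=\tfrac12$ are exactly the classical second-order RK conditions.

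The main obstacle I anticipate is not the formal limit itself but making the ``$f^{(i)}\to M^{(i)}$'' step rigorous enough to be convincing: one needs that the stage values remain nonnegative with well-defined, bounded, bounded-below moments uniformly in $\varepsilon$, so that $M[f^{(i)}]$ and $\tau_{f^{(i)}}$ behave well in the limit. The positivity-preserving property proved in Section \ref{subsec:pos} together with the closed moment system (\ref{moments1}) (which is independent of $\varepsilon$) supplies exactly this uniform control, so I would lean on those two facts. A secondary point worth spelling out is that one should verify $f^*$ being a first-order approximation to $f^n$ does not spoil the argument --- but since $\tau_{f^*}>0$ with bounded moments, the correction term is $O(\Delta t^2/\varepsilon^2)$ times something that vanishes, and the conclusion $f^{n+1}\to M[f^{n+1}]$ is robust. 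For type ARS schemes the same scheme of proof works on the submatrix $\hat A$, which is the content of the companion proposition.
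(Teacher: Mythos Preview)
Your proposal is correct and follows essentially the same route as the paper: use the invertibility of $A$ (type A) and $\tau$ to conclude $f^{(i)}\to M[f^{(i)}]$ as $\varepsilon\to 0$, then pass to the moment system (\ref{moments1}) and identify the Euler flux $\langle M[f^{(i)}]v\phi\rangle$, yielding the explicit RK scheme with tableau $(\tilde A,\tilde{\bw})$. Your discussion of uniform moment control via positivity and of the correction step is somewhat more careful than the paper's formal argument, but the strategy is the same.
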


\begin{proof}
We rewrite the first $\nu$ steps of (\ref{BGKscheme}) using vector notations:
\begin{equation} \label{BGKschemevec}
\begin{split}
 \bF= f^n\be - \Delta t\,\tilde{A}\,v\cdot \nabla_x \bF + \Delta t\,A\,\frac{\tau }{\varepsilon}(M[\bF]-\bF),
\end{split}
\end{equation}
where $\bF:=(f^{(1)},\dots,f^{(\nu)})^T$, $\be:=(1,\dots,1)^T$, $M[\bF]:=(M[f^{(1)}],\dots,M[f^{(\nu)}])^T$, and $\tau:=\text{diag}(\tau_{f^{(1)}},\dots,\tau_{f^{(\nu)}})$. Now fixing $\Delta t$, formally passing the limit $\varepsilon \rightarrow 0$ in (\ref{BGKschemevec}), one has $\Delta t\, A\,\tau (M[\bF]-\bF) \rightarrow 0$. This implies $\bF \rightarrow M[\bF]$ since both $A$ and $\tau$ are invertible (the scheme is of type A and positivity-preserving). Replacing $\bF$ by $M[\bF]$ in the moment system (\ref{moments1}), we obtain
\begin{equation} 
\begin{split}
&U^{(i)} =U^n  - \Delta t\sum_{j=1}^{i-1} \tilde{a}_{ij} \nabla_x \cdot \langle  M[f^{(j)}] v \phi \rangle,\quad i=1,\dots,\nu,\\
& U^{n+1} =U^{(\nu)},
\end{split}
\end{equation}
where $U:=(\rho, \rho u, E)^T$. This is a second-order explicit RK scheme applied to the compressible Euler system (\ref{Euler}).
\end{proof}

\begin{proposition} \label{APtypeARS}
If the IMEX scheme (\ref{BGKscheme}) is of type ARS and GSA, it is AP: for fixed $\Delta t$ and consistent initial data $f^0=M[f^0]$, in the limit $\varepsilon \rightarrow 0$, the scheme becomes a second-order explicit RK scheme applied to the limiting Euler system (\ref{Euler}). If the initial data is inconsistent, the limiting scheme will degenerate to first order.
\end{proposition}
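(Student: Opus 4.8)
The plan is to follow the proof of Proposition~\ref{APtypeA} almost verbatim; the one genuinely new feature is that, for an ARS scheme, the first stage $f^{(1)}=f^n$ (recall $a_{11}=0$) is \emph{not} projected onto the local Maxwellian in the limit $\varepsilon\to0$, so the reduced scheme is a consistent Euler solver only when $f^n$ is itself a Maxwellian.

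First I would rewrite the first $\nu$ stages of (\ref{BGKscheme}) in vector form with the trivial stage $f^{(1)}=f^n$ split off, so that $\hat{\bF}:=(f^{(2)},\dots,f^{(\nu)})^T$ satisfies
\[
\hat{\bF} = \text{(explicit convection terms)} + \Delta t\,\hat A\,\frac{\tau}{\varepsilon}\bigl(M[\hat{\bF}]-\hat{\bF}\bigr),
\]
where $\hat A$ is the invertible submatrix of the ARS tableau (\ref{CK}) and $\tau=\mathrm{diag}(\tau_{f^{(2)}},\dots,\tau_{f^{(\nu)}})$ is invertible because the scheme is positivity-preserving ($a_{ii}>0$ and $\tau_{f^{(i)}}>0$). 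Fixing $\Delta t$ and letting $\varepsilon\to0$ forces $\Delta t\,\hat A\,\tau\bigl(M[\hat{\bF}]-\hat{\bF}\bigr)\to0$, hence $f^{(i)}\to M[f^{(i)}]$ for $i=2,\dots,\nu$. For the correction step, taking the moments $\langle\cdot\,\phi\rangle$ annihilates $\mQ$, so $\langle f^{n+1}\phi\rangle=\langle f^{(\nu)}\phi\rangle$ \emph{exactly}; rewriting (\ref{BGKscheme}) as $f^{n+1}\bigl(1+\alpha\Delta t^2\tau_{f^*}/\varepsilon^2\bigr)=f^{(\nu)}+\alpha\Delta t^2(\tau_{f^*}/\varepsilon^2)M[f^{n+1}]$ and letting $\varepsilon\to0$ gives $f^{n+1}\to M[f^{n+1}]=M[f^{(\nu)}]$, the Maxwellian fixed by the moments of $f^{(\nu)}$.

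Next, in the consistent case $f^0=M[f^0]$, I would show by induction that $f^n=M[f^n]$ for every $n$: the base case is the hypothesis and the inductive step is exactly $f^{n+1}\to M[f^{(\nu)}]$ from the step above. Granting this, at \emph{every} stage $j=1,\dots,\nu$ — the crucial point being $j=1$, where now $f^{(1)}=f^n=M[U^n]=M[U^{(1)}]$ — the transport flux reduces to the Euler flux, $\langle f^{(j)}v\phi\rangle\to\langle M[U^{(j)}]v\phi\rangle$. Substituting into the moment identities (\ref{moments1}) yields precisely the explicit Runge--Kutta scheme with tableau $(\tilde{\bc},\tilde A,\tilde{\bw})$ (with $\tilde{\bw}$ the last row of $\tilde A$ by GSA) applied to the Euler system (\ref{Euler}). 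The order conditions (\ref{second order}) contain $\sum_i\tilde w_i=1$ and $\sum_i\tilde w_i\tilde c_i=\tfrac12$, which are exactly the classical second-order conditions for this explicit scheme, so the limiting scheme is second-order accurate for (\ref{Euler}).

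Finally, for inconsistent $f^0\neq M[f^0]$, consistency fails only at the very first time step ($n=0$): there $f^{(1)}=f^0$ contributes the flux $\langle f^0v\phi\rangle$ rather than $\langle M[f^0]v\phi\rangle$, an $O(1)$ discrepancy. I would carry this defect through the explicit RK stages: it perturbs $U^{(i)}$ for $i\ge2$, and hence $U^1$, by $O(\Delta t)$ relative to the output of the genuine second-order Euler step. From $n=1$ on one has $f^n=M[f^n]$ (since $f^1\to M[f^{(\nu)}]$), so the scheme is exactly the second-order Euler RK of the previous paragraph and its accumulated local errors are $O(\Delta t^2)$; the single $O(\Delta t)$ error injected at the first step therefore dominates, and the method is globally first order. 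I expect this last bookkeeping — confirming that the first-step defect is precisely $O(\Delta t)$ and is neither amplified nor cancelled over the remaining steps (which uses stability of the limiting Euler solver) — to be the only real subtlety; everything else runs parallel to Proposition~\ref{APtypeA} and to the analysis in \cite{DP13}.
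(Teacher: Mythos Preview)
Your proposal is correct and follows essentially the same route as the paper: split off the trivial stage $f^{(1)}=f^n$, use invertibility of $\hat A$ and $\hat\tau$ to force $\hat{\bF}\to M[\hat{\bF}]$, observe that the correction step projects $f^{n+1}\to M[f^{n+1}]$, and conclude by substituting into the moment system (\ref{moments1}). Your write-up is in fact more explicit than the paper's --- you spell out the induction that $f^n=M[f^n]$ for all $n$, verify the second-order conditions for the explicit tableau, and track the $O(\Delta t)$ defect in the inconsistent case --- but these are elaborations of the same argument rather than a different approach.
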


\begin{proof}
For the ARS scheme, $f^{(1)}=f^n$ and $\ba=0$. Rewrite $\bF=(f^{(1)},\hat{\bF})$, $\be=(1,\hat{\be})$, $M[\bF]=(M[f^{(1)}],M[\hat{\bF}])$, $\hat{\tau}:=\text{diag}(\tau_{f^{(2)}},\dots,\tau_{f^{(\nu)}})$, then (\ref{BGKschemevec}) becomes
\begin{align} \label{IMEX-CK1}
\hat{\bF}=f^n\hat{\be}-\Delta t\, \tilde{\ba}\,v\cdot \nabla_x f^{n}-\Delta t\, \hat{\tilde{A}}\,v\cdot \nabla_x \hat{\bF}+\Delta t\,\hat{A}\,\frac{\hat{\tau}}{\varepsilon}  (M[\hat{\bF}]-\hat{\bF}),
\end{align}
where we have used a similar notation for matrix $\tilde{A}$ as that in (\ref{CK}):
\begin{equation}
\left(
 \begin{matrix}
  0 & 0 \\
  \tilde{\ba} & \hat{\tilde{A}}
  \end{matrix}
\right).
\end{equation}
Now fix $\Delta t$, let $\varepsilon \rightarrow 0$, one has $\Delta t \,\hat{A}\,\hat{\tau} (M[\hat{\bF}]-\hat{\bF}) \rightarrow 0$. So $\hat{\bF} \rightarrow M[\hat{\bF}]$ since both $\hat{A}$ and $\hat{\tau}$ are invertible (the scheme is of type CK and positivity-preserving). Replacing $\hat{\bF}$ by $M[\hat{\bF}]$ in the moment system (\ref{moments1}), we have
\begin{equation} 
\begin{split}
&U^{(i)} =U^n -  \Delta t \,\tilde{a}_{i1} \nabla_x \cdot \langle  f^n v \phi \rangle  -\Delta t\sum_{j=2}^{i-1} \tilde{a}_{ij} \nabla_x \cdot \langle  M[f^{(j)}]  v\phi \rangle,\quad i=2,\dots,\nu,\\
& U^{n+1} =U^{(\nu)},
\end{split}
\end{equation}
which is a second-order explicit RK scheme applied to the compressible Euler system (\ref{Euler}) if $f^n=M[f^n]$. On the other hand, the last step of (\ref{BGKscheme}) implies $f^{n+1}\rightarrow M[f^{n+1}]$ as $\varepsilon\rightarrow 0$. Therefore, as long as the initial data is consistent $f^0=M[f^0]$, the scheme is second order. Otherwise, the initial data will bring an $O(\Delta t)$ error and the scheme is reduced to first order.
\end{proof}

\subsection{Entropy-decay property}
\label{subsec:entropy}

It can be shown that the second-order scheme (\ref{BGKscheme}) satisfies an entropy-decay property if the simple first-order upwind scheme is used for spatial derivative. 

Consider the following 1D BGK equation for simplicity:
\begin{equation} \label{1DBGK}
\partial_t f+v \partial_x f=\frac{1}{\varepsilon}(M[f]-f),
\end{equation}
for which we have the entropy inequality
\begin{equation}
\frac{\rd}{\rd{t}}\iint f\log f \rd{v}\rd{x}  \le 0.
\end{equation}

Now assume that the velocity domain is truncated to a large enough symmetric interval $[-|v|_{\text{max}}, |v|_{\text{max}}]$ and the convection term $v\partial_x f $ is discretized by the first-order upwind scheme
\begin{equation} \label{1Dupwind}
(v\partial_x f)_k = \chi_{v\ge 0}v\frac{f_k - f_{k-1}}{\Delta x} + \chi_{v<0}v\frac{f_{k+1} - f_k}{\Delta x},
\end{equation}
together with the periodic or compactly supported boundary condition in $x$. Then we claim that the scheme (\ref{BGKscheme}) satisfies a discrete entropy inequality:
\begin{equation}
S[f^{n+1}] \le S[f^n],
\end{equation}
where the entropy $S$ is defined as
\begin{equation}
S[f]=\Delta x\sum_k S[f_k], \quad \text{with} \quad S[f_k]=\int s[f_k] \rd{v},\quad s[f_k]=f_k\log f_k.
\end{equation}
We prove it for type A and GSA schemes. Type ARS and GSA schemes can be treated similarly. 

First applying (\ref{1Dupwind}) in (\ref{Afi}) gives
\begin{equation}\label{fil}
\begin{split}
f^{(i)}_k = c_{i0}f^n_k & + \sum_{j=1}^{i-1} \left[c_{ij}f^{(j)}_k - \frac{v\Delta t}{\Delta x}\tilde{c}_{ij} \left(\chi_{v\ge0}(f^{(j)}_k - f^{(j)}_{k-1}) + \chi_{v<0}(f^{(j)}_{k+1} - f^{(j)}_k)\right)\right] \\ & + \Delta t\,a_{ii}\frac{1}{\varepsilon}(M[f^{(i)}_k]-f^{(i)}_k), 
\end{split}
\end{equation}
and the CFL condition (\ref{Acfl}) becomes
\begin{equation}
\Delta t \le \min_{i,j} \left\{\frac{c_{ij}}{\tilde{c}_{ij}}\right\}\frac{\Delta x}{|v|_{\text{max}}}.
\end{equation}
Note that (\ref{fil}) can be written equivalently as
\begin{align}
& f^{(i)*}_k = c_{i0}f^n_k + \sum_{j=1}^{i-1} \left[\left(c_{ij}-\tilde{c}_{ij}\frac{|v|\Delta t}{\Delta x}\right)f^{(j)}_k + \tilde{c}_{ij}\frac{|v|\Delta t}{\Delta x} \left(\chi_{v\ge0}f^{(j)}_{k-1} + \chi_{v<0}f^{(j)}_{k+1}\right)\right], \label{scheme1D_1}\\
& f^{(i)}_k = \left(1+\frac{\Delta t}{\varepsilon}a_{ii}\right)^{-1}\left(f^{(i)*}_k + \frac{\Delta t}{\varepsilon}a_{ii}M[f^{(i)}_k]\right). \label{scheme1D_2}
\end{align}
Recall that
\begin{equation}
a_{ii}>0, \quad c_{i0}\ge 0, \quad c_{ij}\ge 0, \quad \tilde{c}_{ij}\ge0, \quad c_{i0} + \sum_{j=1}^{i-1}c_{ij} = 1,
\end{equation}
hence (for each fixed $v$ and $k$) the right hand side of (\ref{scheme1D_1}) is a convex combination of $f^n_k$, $f^{(j)}_k$, and $(\chi_{v\ge0}f^{(j)}_{k-1} + \chi_{v<0}f^{(j)}_{k+1})$, provided the CFL condition is satisfied. Since $s[f_k]$ is a convex function for $f_k>0$, by Jensen's inequality, (\ref{scheme1D_1}) gives
\begin{equation}
s[f^{(i)*}_k]  \le   c_{i0}s[f^n_k] + \sum_{j=1}^{i-1} \left[\left(c_{ij}-\tilde{c}_{ij}\frac{|v|\Delta t}{\Delta x}\right)s[f^{(j)}_k] + \tilde{c}_{ij}\frac{|v|\Delta t}{\Delta x} s[\chi_{v\ge0}f^{(j)}_{k-1} + \chi_{v<0}f^{(j)}_{k+1}]\right],
\end{equation}
after integration in $v$ yields
\begin{equation}
\begin{split}
S[f^{(i)*}_k]  & \leq  c_{i0}S[f^n_k] + \sum_{j=1}^{i-1} \left[ c_{ij}S[f^{(j)}_k]-\tilde{c}_{ij}\frac{\Delta t}{\Delta x}\int |v|\, s[f^{(j)}_k]\rd{v} + \tilde{c}_{ij}\frac{\Delta t}{\Delta x}  \int |v|\,\left(\chi_{v\ge0}s[f^{(j)}_{k-1}] + \chi_{v<0}s[f^{(j)}_{k+1}]\right)\rd{v}\right] \\ 
&= c_{i0}S[f^n_k] + \sum_{j=1}^{i-1} \left[ c_{ij}S[f^{(j)}_k]-\tilde{c}_{ij}\frac{\Delta t}{\Delta x}\left(F^{(j)}_{k+1/2}-F^{(j)}_{k-1/2}\right)\right], \label{SS}
\end{split}
\end{equation}
where
\begin{equation}
F^{(j)}_{k+1/2} := \int |v|\, \left(\chi_{v\ge0}s[f^{(j)}_{k}] - \chi_{v<0}s[f^{(j)}_{k+1}]\right)\rd{v}
\end{equation}
is the discrete entropy flux. Finally summing over $k$ in (\ref{SS}), we obtain
\begin{equation} \label{Sf1}
S[f^{(i)*}] \le c_{i0}S[f^n] + \sum_{j=1}^{i-1} c_{ij}S[f^{(j)}].
\end{equation}
On the other hand, using the fact that\footnote{An easy way to show this is: $\int M \log M \,\rd{v}-\int f \log f\,\rd{v}=\int f\log \frac{M}{f} \,\rd{v}=\int f[\log \frac{M}{f}-\frac{M}{f}+1]\,\rd{v}\leq 0$, where we used the fact that $f$ and $M$ have the same moments $\langle f\phi\rangle =\langle M\phi \rangle$, and the inequality $\log x\leq x-1$ for $x>0$.}
\begin{equation}
S[M[f^{(i)}]] \le S[f^{(i)}],
\end{equation}
from (\ref{scheme1D_2}), which is also a convex combination, one has
\begin{equation}
S[f^{(i)}] \le \left(1+\frac{\Delta t}{\varepsilon}a_{ii}\right)^{-1}\left(S[f^{(i)*}] + \frac{\Delta t}{\varepsilon}a_{ii}S[M[f^{(i)}]]\right) \le \left(1+\frac{\Delta t}{\varepsilon}a_{ii}\right)^{-1}\left(S[f^{(i)*}] + \frac{\Delta t}{\varepsilon}a_{ii}S[f^{(i)}]\right),
\end{equation}
which implies
\begin{equation}
S[f^{(i)}] \le S[f^{(i)*}].
\end{equation}
Therefore,
\begin{equation}
S[f^{(i)}] \le c_{i0}S[f^n] + \sum_{j=1}^{i-1} c_{ij}S[f^{(j)}],
\end{equation}
from which it follows easily that $S[f^{(\nu)}] \le S[f^n]$. Finally, the last step of (\ref{BGKscheme}) has the same structure as (\ref{scheme1D_2}), thus it can be shown in the same way that $S[f^{n+1}]\le S[f^{(\nu)}]$. Altogether, we have proved $S[f^{n+1}] \le S[f^n]$.


\subsection{Spatial and velocity domain discretizations}
\label{subsec:spatial}

In this subsection, we describe in detail how to obtain a fully discretized scheme for the BGK equation. We emphasize that it is not straightforward to apply the established techniques. Special care needs to be given for both spatial and velocity domain discretizations in order to maintain the properties (positivity and AP) of the semi-discretized scheme.

First of all, to preserve the positivity of the solution, a positivity-preserving spatial discretization must be used for the convection term. One can use a high order accurate discontinuous Galerkin or finite volume scheme with a high order accurate bound-preserving limiter by Zhang and Shu in \cite{ZS10, ZS11}. Here we choose to use a finite volume method for $x$-variable and a finite difference method for $v$-variable. 

Consider solving the 1D BGK equation (\ref{1DBGK}) with a possibly $x$-dependent Knudsen number $\varepsilon(x)$ (this is usually the case when handling a multiscale problem). We propose to conduct the temporal discretization first and then the spatial and velocity discretizations. For simplicity, we use the first-order IMEX scheme as an illustration (the high order IMEX can be implemented in a similar fashion), which can be performed in three steps:
\begin{subequations}
\begin{equation}
\frac{f^*-f^n}{\Delta t} + v\partial_x f^n = 0,
\end{equation}
\begin{equation}
\quad U^{n+1} = \langle f^* \phi \rangle,\quad M^{n+1} = M[U^{n+1}],
\end{equation}
\begin{equation}
 f^{n+1} = \frac{1}{1+\Delta t/\varepsilon(x)}f^* +  \frac{\Delta t/\varepsilon(x)}{1+\Delta t/\varepsilon(x)}M^{n+1},
\end{equation}
\end{subequations}
where the middle step is to take the moments of $f^*$ to get macroscopic quantities $U=(\rho,m,E)$ which will define $\rho$, $u$, $T$, hence $M[U]$ accordingly. Now define the grid points in $x$ as $x_{j+1/2}=(j+1/2)\Delta x$.  After integration of the above scheme in $x$ over the interval $I_j=[x_{j-1/2},x_{j+1/2}]$ at the grid point $v=v_k$, we obtain
\begin{subequations}\label{steps}
\begin{equation}
\label{step1}
\frac{f^*_{j,k}-f^n_{j,k}}{\Delta t} + \frac{\hat{F}^n_{j+1/2,k}-\hat{F}^n_{j-1/2,k}}{\Delta x} = 0,
\end{equation}
\begin{equation}
\label{step2}
\quad U^{n+1} = \langle f^* \phi \rangle,\quad M^{n+1} = M[U^{n+1}],
\end{equation}
\begin{equation}
\label{step3}
 f^{n+1}_{j,k} = \frac{1}{\Delta x}\int_{I_j}\left[ \frac{1}{1+\Delta t/\varepsilon(x)}f^*_k(x) +  \frac{\Delta t/\varepsilon(x)}{1+\Delta t/\varepsilon(x)}M^{n+1}_k(x) \right]\rd{x},
\end{equation}
\end{subequations}
where $f_{j,k}$ denotes the cell average of $f$ on the interval $I_j$ at $k$-th velocity grid point, $\hat{F}_{j+1/2,k}$ is the numerical flux approximating $v_k f(t,x,v_k)$ at $x=x_{j+1/2}$, and $f^*_k(x)$ and $M^{n+1}_k(x)$ are high order accurate reconstruction polynomials (reconstructed by the cell averages $\{f^*_{j,k}\}_{j=1}^{N_x}$ and $\{M^{n+1}_{j,k}\}_{j=1}^{N_x}$) approximating the functions $f^*(\cdot,v_k)$ and $M^{n+1}(\cdot,v_k)$ respectively.

In the following, we explain the details of the scheme (\ref{steps}) step by step.

\subsubsection{Handling the convection term}

First we discuss how to enforce the non-negativity of $f^*_{j,k}$ in \eqref{step1}. We omit the index $k$ for convenience. Given the cell averages $f^n_j$, we use the fifth-order finite volume WENO reconstruction \cite{Shu98} to construct fifth-order accurate approximations $f_{j+1/2}^+$ and $f_{j+1/2}^-$ to the point value $f$ at $x=x_{j+1/2}$ and $t=t^n$. Notice that $f_{j+ 1/2}^\pm$  might be negative. There exists a degree four polynomial $p_j(x)$ on the $j$-th cell, which is a fifth-order approximation to $f$ on the cell, and satisfies the property that the cell average of $p_j(x)$ is exactly $f^n_j$, and $p_j(x_{j-1/2}) = f_{j-1/2}^+$, $p_j(x_{j+1/2}) = f_{j+1/2}^-$. For instance, such a polynomial can be obtained by interpolation, even though the construction of this polynomial is not needed in the implementation. Then the four-point Gauss-Lobatto quadrature $f^n_j = \sum_{l=1}^4 p_j(x_{j,l})\omega_l$ is exact, where $\{x_{j,1}=x_{j-1/2},x_{j,2},x_{j,3},x_{j,4}=x_{j+1/2}\}$ are the quadrature points, and $\{w_l\}$ are the corresponding quadrature weights on the interval $[-1/2,1/2]$ such that $\sum_{l=1}^4 w_l=1$. Next by the simplified bound-preserving limiter for finite volume methods described in \cite{ZS11}, we modify $p_j(x)$ into
\begin{subequations}
 \label{limiter}
 \begin{equation}
\tilde{p}_j(x) = \theta_j(p_j(x)-f^n_j) + f^n_j,\quad \theta_j = \min\left\{\left|\frac{f^n_j}{m_j-f^n_j}\right|,1\right\},\quad m_j = \min\{p_j(x_{j-1/2}),p_j(x_{j+1/2}),\xi_j\},
\end{equation}
with
\begin{equation}
\xi_j = \frac{p_j(x_{j,2})\omega_2+p_j(x_{j,3})\omega_3}{\omega_2+\omega_3} = \frac{f^n_j - f_{j-1/2}^+ \omega_1 - f_{j+1/2}^- \omega_4}{\omega_2+\omega_3}.
\end{equation}
\end{subequations}
The limiter \eqref{limiter} guarantees that $\tilde f^-_{j+1/2}=\tilde{p}_j(x_{j+1/2})\geq 0$, $\tilde f^+_{j-1/2}=\tilde{p}_j(x_{j-1/2})\geq 0$ and $\tilde \xi_j=(f^n_j - \tilde f_{j-1/2}^+ \omega_1 - \tilde f_{j+1/2}^- \omega_4)/(\omega_2+\omega_3)\geq 0$. Moreover, the quadrature $f^n_j = \sum_{l=1}^4 \tilde{p}_j(x_{j,l})\omega_l$ is still exact and $\tilde f^\pm_{j+1/2}$ are still fifth-order accurate approximations to the the point value of $f$ at $x=x_{j+1/2}$, see \cite{ZS10, ZS11, Zhang17}. Since we only need $\tilde f^-_{j+1/2}$ and  $\tilde f^+_{j-1/2}$, the limiter \eqref{limiter} is equivalent to the following implementation without using $p_j(x)$:
\begin{subequations}
\begin{equation}
\tilde f^-_{j+1/2} = \theta_j( f^-_{j+1/2}-f^n_j) + f^n_j,\quad 
\tilde f^+_{j-1/2} = \theta_j( f^+_{j-1/2}-f^n_j) + f^n_j,\quad
\theta_j = \min\left\{\left|\frac{f^n_j}{m_j-f^n_j}\right|,1\right\}, 
\end{equation}
\begin{equation}
m_j = \min\{f_{j-1/2}^+,f_{j+1/2}^-,\xi_j\},\quad \xi_j = \frac{f^n_j - f_{j-1/2}^+ \omega_1 - f_{j+1/2}^- \omega_4}{\omega_2+\omega_3}.
\end{equation}
\label{limiter2}
\end{subequations}

Then we define the upwind flux as
\begin{equation}
\label{upwind}
\hat{F}^n_{j+1/2} =\left\{\begin{array}{cc}
                     v_k \tilde f^-_{j+1/2}, &\text{ if } v_k\geq 0, \\
                     v_k \tilde f^+_{j+1/2}, &\text{ if } v_k< 0.
                   \end{array}
\right.
\end{equation}
To see the positivity of $f^*_j$ in \eqref{step1} using \eqref{upwind}, we only discuss the case $v_k\geq 0$ with the other case being similar. We have
\begin{equation}
\begin{split}
f^*_j &=  [\tilde{p}_j(x_{j-1/2})\omega_1 + \tilde{p}_j(x_{j+1/2})\omega_4 + \tilde \xi_j(\omega_2+\omega_3)] - \frac{v_k\Delta t}{\Delta x} (\tilde{p}_j(x_{j+1/2})-\tilde{p}_{j-1}(x_{j-1/2})) \\
& =  \tilde{p}_j(x_{j-1/2})\omega_1 + \tilde{p}_j(x_{j+1/2})\left(\omega_4 - \frac{v_k\Delta t}{\Delta x}\right) + \tilde \xi_j(\omega_2+\omega_3) +\frac{v_k\Delta t}{\Delta x}\tilde{p}_{j-1}(x_{j-1/2}), \\
\end{split}
\end{equation}
which implies the positivity of $f^*_j$ since it is a convex combination of non-negative quantities under the CFL condition $v_k\Delta t/\Delta x\le \omega_4 = 1/12$.

\subsubsection{Handling the collision term}

Now we describe how to compute $M^{n+1}=M[U^{n+1}]$ under the finite volume discretization in $x$. For convenience, we regard $v$ as a continuous variable and omit the superscript $n+1$.

Let $U_j$ be the moments of $f^*_j(v)\geq 0$ on the $j$-th cell, then  $U_j$ belongs to a convex set of admissible states with positive density and temperature:
\begin{equation}
G=\left\{ (\rho,m,E)^T: \rho >0,\quad E-\frac{1}{2}\frac{m^2}{\rho} > 0\right\}.
\end{equation}
Let $\{\tilde{x}_{j,l}\}$ ($l=1,2,3$) denote the three-point Gauss-Legendre quadrature on the $j$-th cell $[x_{j-1/2}, x_{j+1/2}]$ and  $\{\tilde{w}_l\}$ ($l=1,2,3$) be the corresponding quadrature weights on the interval $[-1/2, 1/2]$, which is exact for integrating polynomials of degree five. Given cell averages of macroscopic quantities $U_j\in G$, we would like to reconstruct fifth-order approximations to $U(x)$ at $x=\tilde{x}_{j,l}$, denoted as $U_{j,l},l=1,2,3$. Moreover, we need them to be positive so that $M[U_{j,l}]$ can be well-defined; and conservative so that the final scheme is AP. Namely, we need 
\begin{equation}
U_{j,l}\in G\quad \text{and}\quad \sum_{l=1}^3 \tilde{w}_l U_{j,l}=U_j.
\end{equation}


Such a reconstruction can be done in the following way. First, we construct a polynomial $U_j(x)$ of degree four, which is a fifth-order accurate approximation to $U(x)$ on the interval $I_j$ with $U_j$ as its cell average. There are many ways to construct such a polynomial, e.g., we can first reconstruct two cell end values by the WENO method then construct a Hermite type reconstruction polynomial using these two point values and three averages $U_{j-1}, U_j, U_{j+1}$, see \cite{ZS10}. Thus $\frac{1}{\Delta x}\int_{x_{j-1/2}}^{x_{j+1/2}} {U}_j(x)\rd{x}=U_j$. Second, we apply the simple positivity-preserving limiter in \cite{ZS10_1, Zhang17} to $U_j(x)$ to obtain a modified polynomial $\tilde U_{j}(x)$ such that  $\tilde U_{j}(\tilde{x}_{j,l})\in G$ and the cell average of $\tilde U_{j}(x)$ is still $U_j$. Finally, we set $U_{j,l}=\tilde U_{j}(\tilde{x}_{j,l})$, and we have 
\begin{equation}
\sum_{l=1}^3 \tilde{w}_l U_{j,l}=\sum_{l=1}^3 \tilde{w}_l \tilde U_{j}(\tilde{x}_{j,l})=\frac{1}{\Delta x}\int_{x_{j-1/2}}^{x_{j+1/2}} \tilde{U}_j(x)\rd{x}=U_j.
\end{equation}
Then $M[U_{j,l}],l=1,2,3$ are well-defined and we set
\begin{equation}
M_{j} = \sum_{l=1}^3 \tilde{w}_l M[U_{j,l}].
\end{equation}
This method is fifth-order in $x$, since the reconstruction is fifth-order, and the positivity-preserving limiter does not affect the accuracy for smooth solutions with strictly
positive pressure \cite{ZS10_1}. Also, this method is conservative:
\begin{equation} \label{consAP}
\langle M_j\phi\rangle = \sum_{l=1}^3 \tilde{w}_l \langle M[U_{j,l}]\phi\rangle =  \sum_{l=1}^3 \tilde{w}_l U_{j,l} = U_j=\langle f_j^* \phi \rangle,
\end{equation}
which is the key to obtain AP property.

\subsubsection{Handling the variable $\varepsilon(x)$}

In the last step \eqref{step3} we need to  compute an integral on $I_j$, which can be approximated by the Gauss-Legendre quadrature:
\begin{equation}\label{approxint}
\begin{split}
& \int_{I_j}\left[ \frac{1}{1+\Delta t/\varepsilon(x)}f^*_k(x) +  \frac{\Delta t/\varepsilon(x)}{1+\Delta t/\varepsilon(x)}M^{n+1}_k(x) \right]\rd{x} \\  \approx  & \sum_{l=1}^3\tilde{w}_l \left[ \frac{1}{1+\Delta t/\varepsilon(\tilde{x}_{j,l})}f^*_k(\tilde{x}_{j,l}) +  \frac{\Delta t/\varepsilon(\tilde{x}_{j,l})}{1+\Delta t/\varepsilon(\tilde{x}_{j,l})}M^{n+1}_k(\tilde{x}_{j,l}) \right].
\end{split}
\end{equation}
Thus we only need the approximation of the functions $f^*_k(x)$ and $M^{n+1}_k(x)$ at the quadrature points $\{\tilde{x}_{j,l}\}$ ($l=1,2,3$). The values for $M$ can be read directly from the previous step. The construction of $f$ can be done in the same way as we construct $U_{j,l}\in G$ in the previous section, with the convex set $G$ replaced by the set $\{f: f\geq 0\}$.

\subsubsection{AP property of the fully discretized scheme}

Now we show that the fully discretized scheme (\ref{steps}) is AP. As $\varepsilon \rightarrow 0$, step \eqref{step3} implies
\begin{equation}
f^{n+1}_{j,k} = \sum_{l=1}^3\tilde{w}_l M^{n+1}_k(\tilde{x}_{j,l}) = M^{n+1}_{j,k}.
\end{equation}
Hence after one time step, the solution is projected to the local Maxwellian. For $n\ge 1$, replacing $f^n_{j,k}$ with $M^n_{j,k}$ in (\ref{step1}) and taking the moments gives
\begin{equation}
\frac{ \langle f^*_{j,\cdot} \phi \rangle- \langle M^n_{j,\cdot} \phi \rangle}{\Delta t} +\left\langle \frac{\hat{M}^n_{j+1/2,\cdot} - \hat{M}^n_{j-1/2,\cdot}}{\Delta x} \phi \right\rangle = 0,
\end{equation}
where $\hat{M}_{j+1/2,k}$ is the numerical flux approximating $v_k M(x,v_k)$ at $x=x_{j+1/2}$. Finally, using (\ref{consAP}), we have
\begin{equation}
\frac{ \langle M^{n+1}_{j,\cdot} \phi \rangle- \langle M^n_{j,\cdot} \phi \rangle}{\Delta t} +\left\langle \frac{\hat{M}^n_{j+1/2,\cdot} - \hat{M}^n_{j-1/2,\cdot}}{\Delta x} \phi \right\rangle = 0.
\end{equation}
This is a fully discretized kinetic scheme for the limiting Euler equations. Thus the scheme (\ref{steps}) is AP.

\section{Generalization to the hyperbolic relaxation system}
\label{sec:hyper}

The general framework presented in this paper can also be generalized to other problems that have a similar structure, for instance, the hyperbolic relaxation system. We give one example here.

The Broadwell model \cite{Broadwell64} is a simple discrete velocity kinetic model:
\begin{equation} \label{broadwell}
\left\{
\begin{split}
\partial_t f_+ + \partial_x f_+ & = \frac{1}{\varepsilon}(f_0^2-f_+f_-), \\
\partial_t f_0 & = - \frac{1}{\varepsilon}(f_0^2-f_+f_-), \\
\partial_t f_- - \partial_x f_- & = \frac{1}{\varepsilon}(f_0^2-f_+f_-), \\
\end{split}
\right.
\end{equation}
where $\varepsilon$ is the mean free path, $f_+$, $f_0$, and $f_-$ denote the mass densities of particles with speed 1, 0, and -1, respectively. The model can be written equivalently in terms of moment variables:
\begin{equation} \label{broadmoments}
\left\{
\begin{split}
\partial_t \rho + \partial_x m & = 0,\\
\partial_t m + \partial_x z & = 0,\\
\partial_t z + \partial_x m & = \frac{1}{2\varepsilon}(\rho^2 + m^2 - 2\rho z),\\
\end{split}
\right.
\end{equation}
where $\rho:=f_++2f_0+f_-$, $m := f_+-f_-$, and $z:= f_++f_-$. From (\ref{broadmoments}), it is clear that when $\varepsilon\rightarrow 0$, $z\rightarrow \frac{\rho^2+m^2}{2\rho}$. This, substituted into the first two equations, yields a closed hyperbolic system, an analog of the Euler limit:
\begin{equation} \label{broadlimit}
\left\{
\begin{split}
&\partial_t \rho + \partial_x m  = 0,\\
&\partial_t m + \partial_x \left (\frac{\rho^2+m^2}{2\rho} \right)  = 0.\\
\end{split}
\right.
\end{equation}
Similarly as the BGK model, it would be desirable to have a high order scheme for (\ref{broadwell}) that is AP (can capture the limit (\ref{broadlimit}) without resolving $\varepsilon$) as well as maintains the positivity of the solution ($f_+$, $f_0$, and $f_-$ need to be non-negative by their physical meaning). We mention that \cite{CJR97} proposed a second-order AP scheme for the Broadwell model but it is not positivity-preserving.

We now define $f = (f_+,f_0,f_-)^T$, $\mT(f) = (-\partial_x f_+,0,\partial_x f_-)^T$, and $\mQ(f) = (f_0^2-f_+f_-,-(f_0^2-f_+f_-),f_0^2-f_+f_-)^T$. Then (\ref{broadwell}) falls into the general form (\ref{ode}). Define the matrix $P$ as
\begin{equation} 
\left(
 \begin{matrix}
  1 & 2 & 1 \\
  1 & 0 & -1\\
  1 & 0 & 1
  \end{matrix}
\right),
\end{equation}
then $Pf=(\rho,m,z)^T$, and $P\mQ(f)=(0,0,(\rho^2 + m^2 - 2\rho z)/2)^T$.

In order to apply the general framework, we need to verify the operators $\mT$ and $\mQ$ satisfy the assumptions given in Section \ref{subsec:general}. The transport operator $\mT$ can definitely satisfy the positivity condition (\ref{cflcond}) provided a positivity-preserving spatial discretization is used. To analyze the positivity conditions for $\mQ$, first notice that $f-b\mQ(f) = g$, upon multiplication of $P$ on both sides from the left, implies 
\begin{equation}
\begin{split}
& \rho_f = \rho_g,\\
& m_f = m_g, \\
& z_f - \frac{b}{2}(\rho_f^2 + m_f^2 - 2\rho_fz_f) = z_g,
\end{split}
\end{equation}
from which one has
\begin{equation}
z_f = \left(\frac{b}{2}(\rho_f^2 + m_f^2) + z_g\right)/(1+b\rho_f).
\end{equation}
If $g\ge 0$, or equivalently, $\rho_g \ge z_g \ge |m_g|$, then, to check $f\ge0$ for any $b\ge0$, it suffices to check $\rho_f \ge z_f$ and $z_f \ge |m_f|$, which follow from
\begin{equation}
\rho_f-z_f=\frac{\frac{b}{2}(\rho_f^2-m_f^2)+\rho_f-z_g}{1+b\rho_f}=\frac{\frac{b}{2}(\rho_g^2-m_g^2)+\rho_g-z_g}{1+b\rho_g}\ge 0,
\end{equation}
\begin{equation}
z_f -|m_f|= \frac{\frac{b}{2}(\rho_f-|m_f|)^2+z_g-|m_f|}{1+b\rho_f}=\frac{\frac{b}{2}(\rho_g-|m_g|)^2+z_g-|m_g|}{1+b\rho_g}\ge 0.
\end{equation}
This proves (\ref{poscond1}). To show (\ref{poscond2_1}), notice that
\begin{equation}
\mQ'(g)\mQ(f) = -\rho_g\mQ(f),
\end{equation}
and (\ref{poscond2_1}) follows from (\ref{poscond1}) since $\rho_g\ge 0$. Finally, for (\ref{poscond2_2}), 
\begin{equation}
f+b\mQ'(f)\mQ(f)=h  \, \Longleftrightarrow \, f-b\rho_f\mQ(f)=h,
\end{equation}
which upon multiplication of $P$ on the left gives $\rho_f=\rho_h$. If $h\ge 0$, $\rho_f=\rho_h\ge 0$. Then (\ref{poscond2_2}) follows again from (\ref{poscond1}).

Therefore, the scheme (\ref{scheme1})-(\ref{scheme3}) can be applied to the Broadwell model, resulting in a second-order, positivity-preserving scheme. A similar AP property as for the BGK equation can be proved straightforwardly using the $(\rho,m,z)$ formulation (\ref{broadmoments}). We omit the detail.

Finally, we briefly outline how to prove the entropy-decay property of the scheme when using the upwind spatial discretization. The entropy for the Broadwell model is defined by
\begin{equation}
S[f] = \Delta x\sum_k  [f_{+,k}\log f_{+,k} + 2f_{0,k}\log f_{0,k} + f_{-,k}\log f_{-,k} ],
\end{equation}
where $k$ is the spatial index. We show that $S[f^{n+1}]\le S[f^n]$.

First, the transport part can be done in the same way as (\ref{Sf1}). For the collision part,
\begin{equation}\label{Sf3}
f^{(i)} = f^{(i)*} + \Delta t\,a_{ii}\frac{1}{\varepsilon}\mQ(f^{(i)}),
\end{equation}
the entropy inequality for this step, namely, $S[f^{(i)}] \le S[f^{(i)*}]$, was proved in \cite{CJR97}. As for the last step
\begin{equation}\label{Sf4}
f^{n+1} = f^{(\nu)} + \alpha \Delta t^2 \frac{1}{\varepsilon^2} \rho_{f^*}\mQ(f^{n+1}),
\end{equation}
if $f^*=f^n$ or $f^{(i)}$, $\rho_{f^*}$ is a known non-negative constant, and the proof for (\ref{Sf3}) implies $S[f^{n+1}]\le S[f^{(\nu)}]$; if $f^*=f^{n+1}$, one first takes the moment of (\ref{Sf4}) (i.e., multiply $P$ on both sides from the left) and gets
\begin{equation}
\rho_{f^{n+1}} = \rho_{f^{(\nu)}}\ge 0,
\end{equation}
and then can obtain the same conclusion.

\section{Numerical results}
\label{sec:num}

In this section we demonstrate numerically the properties of the proposed IMEX schemes. We will solve the 1D BGK equation (\ref{1DBGK}) in $x\in [0,2]$ with periodic boundary condition (except the test in Section \ref{num:pp}, where the Dirichlet boundary condition is assumed), and in a large enough velocity domain $v\in[-|v|_{\text{max}},|v|_{\text{max}}]$. The $x$-space is discretized into $N_x$ cells with $\Delta x = 2/{N_x}$. The $v$-space is discretized into $N_v$ grid points with $\Delta v = 2|v|_{\text{max}}/{N_v}$. We fix the parameters $N_v=150$ and $|v|_{\text{max}}=15$ such that the discretization error in $v$ is much smaller than that in space and time. We will test the two IMEX schemes given in Section \ref{subsec:IMEX}. For brevity, in the following we refer the scheme in Section \ref{subsubsec:typeA} as scheme A, and the scheme in Section \ref{subsubsec:typeARS} as scheme ARS.

\subsection{Accuracy test}

We first verify the second-order accuracy of the proposed schemes. We expect that 1) in the kinetic regime $\varepsilon=O(1)$, both scheme A and scheme ARS are second-order accurate; 2) in the fluid regime $\varepsilon\ll 1$, for consistent initial data, both schemes exhibit second-order accuracy; for inconsistent initial data, scheme A is still second order while scheme ARS will degrade to first order (see Propositions \ref{APtypeA} and \ref{APtypeARS}).

We first consider inconsistent initial data
\begin{equation}\label{incon1_2}
f(0,x,v) = 0.5M_{\rho,u,T} + 0.3M_{\rho,-0.5u,T},
\end{equation}
with
\begin{equation}\label{incon1_1}
\rho = 1+0.2\sin(\pi x),\quad u = 1,\quad T = \frac{1}{1+0.2\sin(\pi x)},
\end{equation}
and compute the solution to time $t=0.1$. We choose different values of $\varepsilon$, ranging from the kinetic regime ($\varepsilon=1$) to the fluid regime ($\varepsilon=10^{-10}$). We choose different $\Delta x$ and set $\Delta t = 0.5\Delta x/{|v|_{\text{max}}}$, i.e., fix the CFL number as 0.5, which guarantees both schemes are stable. (This CFL number is not small enough to guarantee positivity. We will consider the positivity-preserving property in the following test. For the same reason, the positivity-preserving limiters are turned off here.) Since the exact solution is not available, the numerical solution on a finer mesh $\Delta x/2$ is used as a reference solution to compute the error for the solution on the mesh of size $\Delta x$:
\begin{equation}
\text{error}_{\Delta t,\Delta x}:=\|f_{\Delta t,\Delta x}-f_{\Delta t/2,\Delta x/2}\|_{L^2_{x,v}}.
\end{equation}
The results are shown in Tables \ref{table1} and \ref{table2}. In all the results, the spatial error dominates for small $N_x$, and the time error dominates for large $N_x$. One can clearly see that in the kinetic regime ($\varepsilon=1,10^{-2}$), both schemes are second order; in the fluid regime ($\varepsilon=10^{-8},10^{-10}$), the scheme A is second order and the scheme ARS is first order, as expected. 

\begin{table}
\begin{center}
\begin{tabular}{|l|c|c|c|c|c|c|}
\hline
&$\varepsilon=1$&$\varepsilon=10^{-2}$&$\varepsilon=10^{-4}$&$\varepsilon=10^{-6}$&$\varepsilon=10^{-8}$&$\varepsilon=10^{-10}$\\
\hline
$N_x=10$&5.60$\times 10^{-4}$&4.67$\times 10^{-4}$&4.67$\times 10^{-4}$&4.67$\times 10^{-4}$&4.67$\times 10^{-4}$&4.67$\times 10^{-4}$\\
\hline
$N_x=20$&5.91$\times 10^{-5}$&4.63$\times 10^{-5}$&3.62$\times 10^{-5}$&3.65$\times 10^{-5}$&3.65$\times 10^{-5}$&3.65$\times 10^{-5}$\\
Order&3.25&3.33&3.69&3.68&3.68&3.68\\
\hline
$N_x=40$&4.33$\times 10^{-6}$&7.11$\times 10^{-6}$&3.31$\times 10^{-6}$&2.46$\times 10^{-6}$&2.46$\times 10^{-6}$&2.46$\times 10^{-6}$\\
Order&3.77&2.70&3.45&3.89&3.89&3.89\\
\hline
$N_x=80$&2.11$\times 10^{-7}$&1.67$\times 10^{-6}$&2.92$\times 10^{-6}$&1.09$\times 10^{-7}$&1.10$\times 10^{-7}$&1.10$\times 10^{-7}$\\
Order&4.36&2.09&0.18&4.49&4.49&4.49\\
\hline
$N_x=160$&1.29$\times 10^{-8}$&4.22$\times 10^{-7}$&3.03$\times 10^{-6}$&6.58$\times 10^{-9}$&6.28$\times 10^{-9}$&6.28$\times 10^{-9}$\\
Order&4.03&1.99&-0.05&4.06&4.13&4.13\\
\hline
$N_x=320$&2.94$\times 10^{-9}$&1.06$\times 10^{-7}$&2.79$\times 10^{-6}$&4.71$\times 10^{-9}$&1.45$\times 10^{-9}$&1.45$\times 10^{-9}$\\
Order&2.13&1.99&0.12&0.48&2.11&2.11\\
\hline
$N_x=640$&7.42$\times 10^{-10}$&2.67$\times 10^{-8}$&1.52$\times 10^{-6}$&8.30$\times 10^{-9}$&3.67$\times 10^{-10}$&3.68$\times 10^{-10}$\\
Order&1.99&1.99&0.88&-0.82&1.98&1.98\\
\hline
$N_x=1280$&1.86$\times 10^{-10}$&6.69$\times 10^{-9}$&5.46$\times 10^{-7}$&1.44$\times 10^{-8}$&9.20$\times 10^{-11}$&9.20$\times 10^{-11}$\\
Order&2.00&2.00&1.47&-0.80&2.00&2.00\\
\hline
\end{tabular}
\caption{Accuracy test. Scheme A. Inconsistent initial data.}
\label{table1}
\end{center}
\end{table}

\begin{table}
\begin{center}
\begin{tabular}{|l|c|c|c|c|c|c|}
\hline
&$\varepsilon=1$&$\varepsilon=10^{-2}$&$\varepsilon=10^{-4}$&$\varepsilon=10^{-6}$&$\varepsilon=10^{-8}$&$\varepsilon=10^{-10}$\\
\hline
$N_x=10$&5.60$\times 10^{-4}$&5.02$\times 10^{-4}$&4.70$\times 10^{-4}$&4.70$\times 10^{-4}$&4.70$\times 10^{-4}$&4.70$\times 10^{-4}$\\
\hline
$N_x=20$&5.91$\times 10^{-5}$&9.82$\times 10^{-5}$&3.71$\times 10^{-5}$&3.71$\times 10^{-5}$&3.71$\times 10^{-5}$&3.71$\times 10^{-5}$\\
Order&3.25&2.35&3.66&3.66&3.66&3.66\\
\hline
$N_x=40$&4.33$\times 10^{-6}$&2.89$\times 10^{-5}$&4.82$\times 10^{-6}$&4.79$\times 10^{-6}$&4.79$\times 10^{-6}$&4.79$\times 10^{-6}$\\
Order&3.77&1.76&2.94&2.95&2.95&2.95\\
\hline
$N_x=80$&2.12$\times 10^{-7}$&8.14$\times 10^{-6}$&2.35$\times 10^{-6}$&2.21$\times 10^{-6}$&2.21$\times 10^{-6}$&2.21$\times 10^{-6}$\\
Order&4.36&1.83&1.04&1.12&1.12&1.12\\
\hline
$N_x=160$&1.22$\times 10^{-8}$&2.17$\times 10^{-6}$&2.00$\times 10^{-6}$&1.12$\times 10^{-6}$&1.12$\times 10^{-6}$&1.12$\times 10^{-6}$\\
Order&4.11&1.91&0.23&0.99&0.99&0.99\\
\hline
$N_x=320$&2.71$\times 10^{-9}$&5.59$\times 10^{-7}$&2.94$\times 10^{-6}$&5.58$\times 10^{-7}$&5.58$\times 10^{-7}$&5.58$\times 10^{-7}$\\
Order&2.17&1.95&-0.56&1.00&1.00&1.00\\
\hline
$N_x=640$&6.83$\times 10^{-10}$&1.42$\times 10^{-7}$&2.99$\times 10^{-6}$&2.79$\times 10^{-7}$&2.79$\times 10^{-7}$&2.79$\times 10^{-7}$\\
Order&1.99&1.98&-0.02&1.00&1.00&1.00\\
\hline
$N_x=1280$&1.71$\times 10^{-10}$&3.58$\times 10^{-8}$&1.76$\times 10^{-6}$&1.40$\times 10^{-7}$&1.40$\times 10^{-7}$&1.40$\times 10^{-7}$\\
Order&2.00&1.99&0.76&1.00&1.00&1.00\\
\hline
\end{tabular}
\caption{Accuracy test. Scheme ARS. Inconsistent initial data.}
\label{table2}
\end{center}
\end{table}

We also solve the equation in the intermediate and fluid regimes with a consistent initial data
 \begin{equation}
 f (0,x,v)= M_{\rho,u,T},
 \end{equation}
where $\rho$, $u$ and $T$ are the same as in (\ref{incon1_1}). The results are shown in Tables \ref{table3} and \ref{table4}. It is clear that in the fluid regime both schemes remain second-order accuracy.

\begin{table}
\begin{center}
\begin{tabular}{|l|c|c|c|c|}
\hline
&$\varepsilon=10^{-4}$&$\varepsilon=10^{-6}$&$\varepsilon=10^{-8}$&$\varepsilon=10^{-10}$\\
\hline
$N_x=10$&1.04$\times 10^{-3}$&1.05$\times 10^{-3}$&1.05$\times 10^{-3}$&1.05$\times 10^{-3}$\\
\hline
$N_x=20$&1.01$\times 10^{-4}$&1.01$\times 10^{-4}$&1.01$\times 10^{-4}$&1.01$\times 10^{-4}$\\
Order&3.38&3.37&3.37&3.37\\
\hline
$N_x=40$&8.05$\times 10^{-6}$&7.64$\times 10^{-6}$&7.64$\times 10^{-6}$&7.64$\times 10^{-6}$\\
Order&3.64&3.73&3.73&3.73\\
\hline
$N_x=80$&4.17$\times 10^{-6}$&4.79$\times 10^{-7}$&4.79$\times 10^{-7}$&4.79$\times 10^{-7}$\\
Order&0.95&4.00&3.99&3.99\\
\hline
$N_x=160$&4.76$\times 10^{-6}$&1.83$\times 10^{-8}$&1.82$\times 10^{-8}$&1.82$\times 10^{-8}$\\
Order&-0.19&4.71&4.72&4.72\\
\hline
$N_x=320$&4.46$\times 10^{-6}$&6.16$\times 10^{-9}$&1.52$\times 10^{-9}$&1.52$\times 10^{-9}$\\
Order&0.10&1.58&3.58&3.58\\
\hline
$N_x=640$&2.40$\times 10^{-6}$&1.11$\times 10^{-8}$&4.03$\times 10^{-10}$&4.03$\times 10^{-10}$\\
Order&0.89&-0.85&1.92&1.92\\
\hline
$N_x=1280$&8.54$\times 10^{-7}$&1.94$\times 10^{-8}$&1.03$\times 10^{-10}$&1.02$\times 10^{-10}$\\
Order&1.49&-0.80&1.97&1.98\\
\hline
\end{tabular}
\caption{Accuracy test. Scheme A. Consistent initial data.}
\label{table3}
\end{center}
\end{table}

\begin{table}
\begin{center}
\begin{tabular}{|l|c|c|c|c|}
\hline
&$\varepsilon=10^{-4}$&$\varepsilon=10^{-6}$&$\varepsilon=10^{-8}$&$\varepsilon=10^{-10}$\\
\hline
$N_x=10$&1.04$\times 10^{-3}$&1.05$\times 10^{-3}$&1.05$\times 10^{-3}$&1.05$\times 10^{-3}$\\
\hline
$N_x=20$&1.01$\times 10^{-4}$&1.01$\times 10^{-4}$&1.01$\times 10^{-4}$&1.01$\times 10^{-4}$\\
Order&3.37&3.37&3.37&3.37\\
\hline
$N_x=40$&7.62$\times 10^{-6}$&7.64$\times 10^{-6}$&7.64$\times 10^{-6}$&7.64$\times 10^{-6}$\\
Order&3.73&3.73&3.73&3.73\\
\hline
$N_x=80$&1.24$\times 10^{-6}$&4.79$\times 10^{-7}$&4.79$\times 10^{-7}$&4.79$\times 10^{-7}$\\
Order&2.62&3.99&3.99&3.99\\
\hline
$N_x=160$&2.65$\times 10^{-6}$&1.82$\times 10^{-8}$&1.82$\times 10^{-8}$&1.82$\times 10^{-8}$\\
Order&-1.09&4.72&4.72&4.72\\
\hline
$N_x=320$&4.51$\times 10^{-6}$&1.60$\times 10^{-9}$&1.52$\times 10^{-9}$&1.52$\times 10^{-9}$\\
Order&-0.77&3.50&3.58&3.58\\
\hline
$N_x=640$&4.56$\times 10^{-6}$&9.94$\times 10^{-10}$&4.03$\times 10^{-10}$&4.03$\times 10^{-10}$\\
Order&-0.02&0.69&1.92&1.92\\
\hline
$N_x=1280$&2.67$\times 10^{-6}$&1.67$\times 10^{-9}$&1.02$\times 10^{-10}$&1.02$\times 10^{-10}$\\
Order&0.78&-0.75&1.97&1.98\\
\hline
\end{tabular}
\caption{Accuracy test. Scheme ARS. Consistent initial data.}
\label{table4}
\end{center}
\end{table}

Note that there is always some extent of order reduction in the intermediate regime $\varepsilon=O(\Delta t)$. The uniform accuracy of IMEX schemes is an open problem and we do not attempt to address this issue in the current work (see \cite{HZ17} for more numerical test and evidence).

\subsection{Positivity-preserving property}
\label{num:pp}

We now illustrate the positivity-preserving property of the scheme. Consider the initial data
\begin{equation}
f(0,x,v) = M_{\rho,u,T},
\end{equation}
with
\begin{equation}
(\rho,u,T)=\left\{\begin{split}
& (1,0,1),\quad  0\leq x\leq 1,\\
& (0.125,0,0.25),\quad 1<x \leq 2.
\end{split}\right.
\end{equation}

With the positivity-preserving limiters, the CFL coefficient of the spatial discretization is $1/12$, that is, the constant $\mC$ in (\ref{Acfl}) and (\ref{ARScfl}) is $\frac{1}{12}\frac{\Delta x}{ |v|_{\text{max}}}$. In view of both time and spatial discretizations, we choose the time step as $\Delta t = \frac{1}{24}\frac{\Delta x}{ |v|_{\text{max}}}$ to satisfy the positivity CFL condition. We take $N_x=80$.

The numerical solutions computed by both scheme A and scheme ARS exhibit no negative cell averages and are omitted here. As a comparison, we solve the same equation with the same initial data and spatial discretization, but using the ARS(2,2,2) scheme in time \cite{ARS97}, which is a standard second-order accurate IMEX scheme with no positivity-preserving property. The number of negative cells (out of $80\times 150=12000$ cells) is tracked and reported in Figure \ref{fig:negcell}. One can see that a significant number of cell averages become negative in the fluid regime, if the time discretization is not positivity-preserving.

\begin{figure}
\begin{center}
	\includegraphics[width=5.6in,height=2.5in]{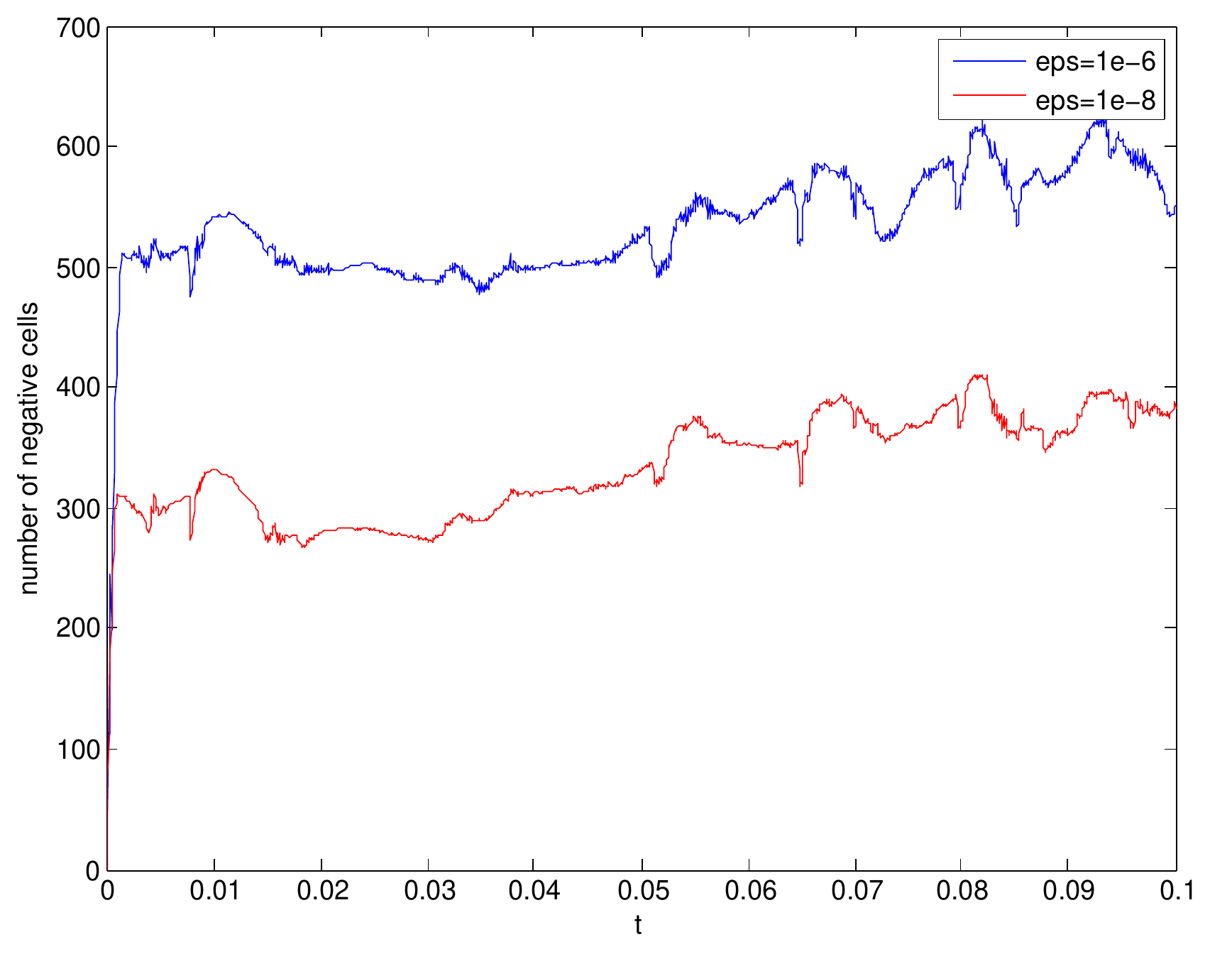}
	\caption{Total number of negative cells for the ARS(2,2,2) scheme during time evolution. Blue line: $\varepsilon=10^{-6}$; Red line: $\varepsilon=10^{-8}$.}
	\label{fig:negcell}
\end{center}	
\end{figure}

\subsection{AP property}

Finally, to illustrate the AP property, we solve the BGK equation in a mixed regime. We take $\varepsilon=\varepsilon(x)$ as follows:
\begin{equation}
\varepsilon(x) = \varepsilon_0 + (\tanh(1-11(x-1))+\tanh(1+11(x-1))), \quad \varepsilon_0 = 10^{-5},
\end{equation}
as shown in Figure \ref{fig:mixed}. The $\varepsilon$ is chosen such that in the middle part of the domain, the problem is in the kinetic regime ($\varepsilon(x)=O(1)$); while in the left and right parts, the problem is in the fluid regime ($\varepsilon\approx 10^{-5}$). To handle this multiscale problem, one can use the domain decomposition approach, i.e., solve the BGK equation in the kinetic regime and the Euler equations in the fluid regime. But identifying the interface and coupling conditions between two regimes is a challenging task. An alternative approach is to solve the BGK equation exclusively in the entire domain. But to insure stability, an explicit scheme would require the time step to resolve the smallest value of $\varepsilon$ which is extremely expensive. This is where the AP scheme shows its power: it is a consistent scheme to the kinetic equation when $\varepsilon=O(1)$, and will automatically become a consistent scheme for the fluid equation when $\varepsilon\rightarrow 0$.
\begin{figure}
\begin{center}
	\includegraphics[width=4.8in,height=2.4in]{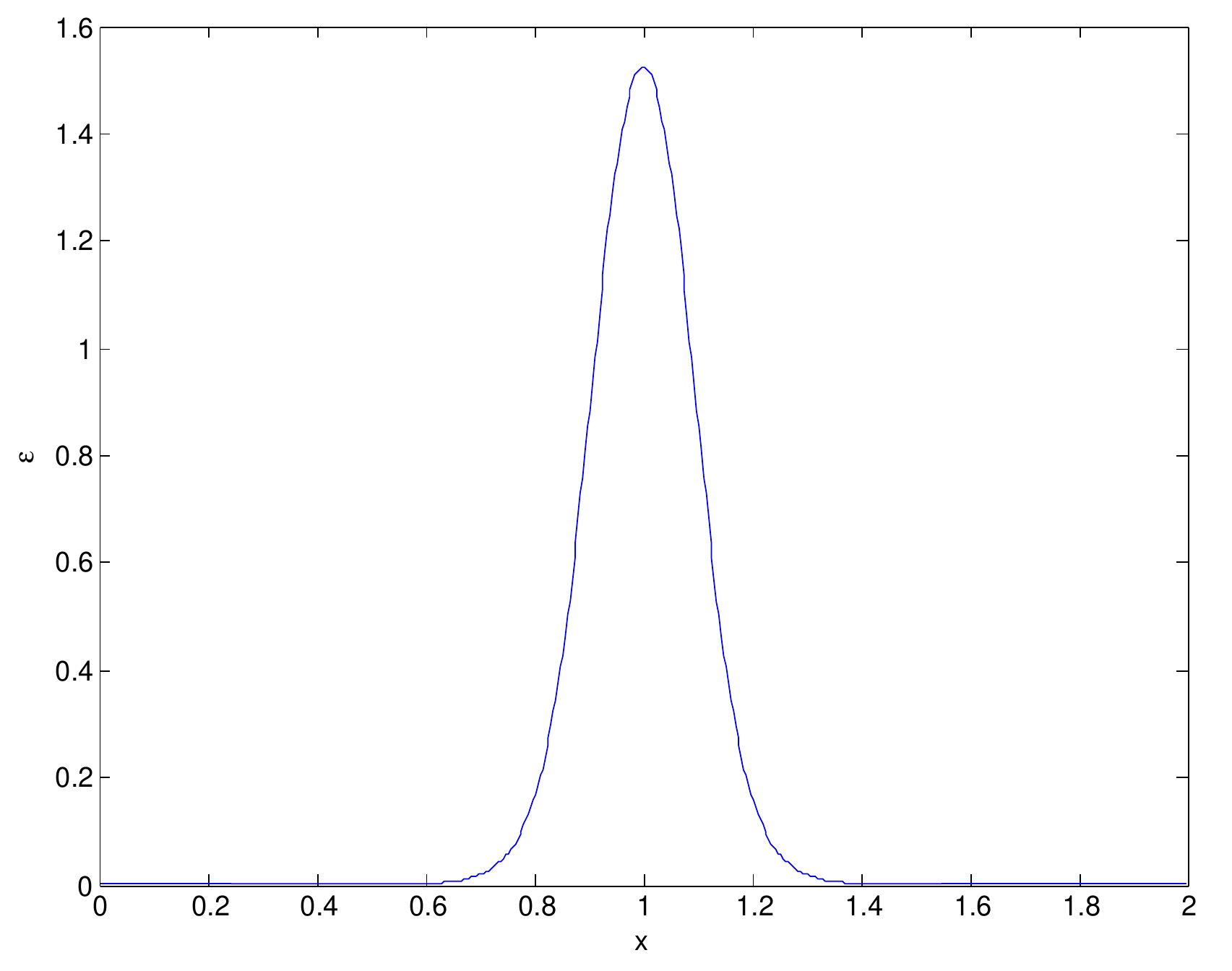}
	\caption{Profile of $\varepsilon(x)$ in a mixed regime problem.}
	\label{fig:mixed}
\end{center}
\end{figure}

We take the same initial data as in (\ref{incon1_2})-(\ref{incon1_1}) and solve the problem using scheme A and scheme ARS with $N_x=40$. We compare the macroscopic quantities at time $t=0.5$ with a reference solution computed by the explicit second-order SSP-RK scheme \cite{SO88} with $N_x=80$. Note that for AP schemes, $\Delta t = \frac{1}{24}\frac{\Delta x}{|v|_{\text{max}}}\approx 7\times 10^{-5}$; while for the explicit SSP scheme, $\Delta t = \frac{1}{240}\frac{\Delta x}{|v|_{\text{max}}}\approx 7\times 10^{-6} $ which needs to resolve $\varepsilon$. One can see that the solutions of AP schemes agree well with the reference solution in Figure \ref{fig:AP}.
\begin{figure}
\begin{center}
	\includegraphics[width=5.8in,height=2.5in]{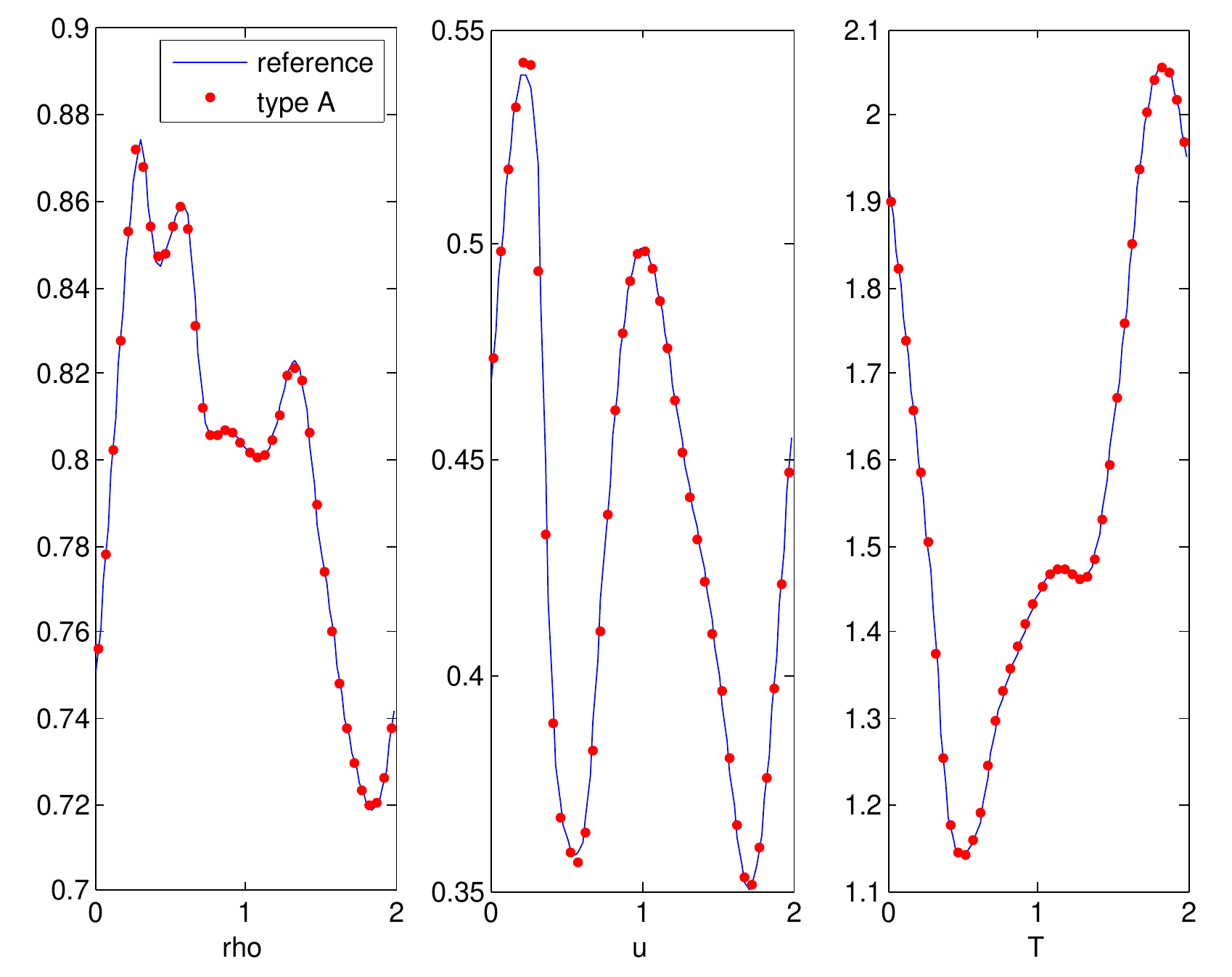}
	\caption{Mixed regime problem. Left to right: density $\rho$, velocity $u$ and temperature $T$. Solid line: reference solution computed by the second-order SSP-RK scheme. Dots: solution computed by scheme A. The result of scheme ARS is omitted since it is indistinguishable from that of scheme A in the picture.}
\label{fig:AP}	
\end{center}
\end{figure}

\section{Conclusion}
\label{sec:conc}

We have introduced a family of second-order IMEX schemes for the BGK equation. The method is asymptotic-preserving: it reduces to a second-order explicit RK scheme for the compressible Euler equations as the Knudsen number $\varepsilon \rightarrow 0$. Meanwhile, the method is positivity-preserving, provided the time step satisfies a CFL condition independent of $\varepsilon$. The method also satisfies an entropy-decay property when coupled with proper spatial discretizations. The key idea is to add a correction step to the conventional IMEX-RK schemes. Due to the special structure of the BGK operator, this step maintains both positivity and AP property, and is very easy to implement. We considered two types of commonly used IMEX-RK schemes (one of type A and one of type ARS) and constructed two examples, one of each type respectively. We investigated, both analytically and numerically, the properties of the proposed schemes. Furthermore, we showed that it is possible to generalize the method to some hyperbolic relaxation system such as the Broadwell model which demands positivity, and provided a strategy to extend the method to third order. Some future work include the construction of high-order asymptotic-preserving and positivity-preserving schemes for other kinetic models, for example, the Fokker-Planck equation, the full Boltzmann equation, etc.

\section*{Appendix 1: Proof of minimum number of stages for second-order schemes}

In this Appendix, we prove that the minimum number of stages required to construct a second-order positivity-preserving IMEX scheme is $\nu=3$ for type A and GSA schemes, and $\nu=4$ for type ARS and GSA schemes. 

We start from type A and GSA schemes. One stage is clearly impossible since the explicit term $\mT$ is not involved. For two stages,  the double Butcher tableau (\ref{tableau}) looks like
\begin{equation}
\centering
\begin{tabular}{c | c c}
 0 & 0 & 0  \\
 $\tilde{a}_{21}$ & $\tilde{a}_{21}$ & 0 \\
\hline
& $\tilde{a}_{21}$ &  0
\end{tabular} \quad \quad
\begin{tabular}{c | c c}
 $a_{11}$ & $a_{11}$ & 0  \\
 $a_{21} + a_{22}$ & $a_{21}$ & $a_{22}$ \\
\hline
& $a_{21}$ &  $a_{22}$ 
\end{tabular}\end{equation}
This gives $\sum_{i=1}^2 \tilde{w}_i\tilde{c}_i = 0$, which contradicts the second-order conditions (\ref{second order}).

For type ARS and GSA schemes, one or two stages is impossible to achieve second order for the same reason as above. For three stages, the double Butcher tableau (\ref{tableau}) looks like
\begin{equation} 
\centering
\begin{tabular}{c | c c c}
 0 & 0 & 0 & 0\\
 $\tilde{a}_{21}$ & $\tilde{a}_{21}$ & 0 & 0\\
 $ \tilde{a}_{31} + \tilde{a}_{32}$ & $\tilde{a}_{31}$ & $\tilde{a}_{32}$ & 0 \\
\hline
& $\tilde{a}_{31}$ &  $\tilde{a}_{32}$  & 0
\end{tabular} \quad \quad
\begin{tabular}{c | c c c}
 0 & 0 & 0 & 0  \\
 $a_{22}$ & 0 & $a_{22}$ & 0\\
 $a_{32}+a_{33}$ & 0 & $a_{32}$ & $a_{33}$\\
\hline
& 0 &  $a_{32}$ & $a_{33}$ 
\end{tabular}\end{equation}
and the positivity conditions (\ref{ARSpos}) reduce to 
\begin{itemize}
\item for $i=2$,
\begin{equation}
\begin{split}
&  a_{22}>0, \quad c_{20}=1\ge 0, \quad \tilde{c}_{20}=\tilde{a}_{21}\ge 0,
\end{split}
\end{equation}
\item for $i=3$,
\begin{equation}
\begin{split}
&  a_{33}>0, \quad c_{30}=1-\frac{a_{32}}{a_{22}}\ge 0, \quad \tilde{c}_{30}=\tilde{a}_{31}-\frac{a_{32}\tilde{a}_{21}}{a_{22}} \ge 0,\\
& c_{32}=\frac{a_{32}}{a_{22}} \ge 0, \quad \tilde{c}_{32}=\tilde{a}_{32}\ge 0,
\end{split}
\end{equation}
\end{itemize}
from which it is clear that all the coefficients $a_{ij}$ and $\tilde{a}_{ij}$ are non-negative. On the other hand, the second-order conditions (\ref{second order}) give
\begin{equation}
\tilde{a}_{31} + \tilde{a}_{32} = 1,\quad a_{32}+a_{33} = 1,\quad \tilde{a}_{21}\tilde{a}_{32} = \frac{1}{2},\quad \tilde{a}_{32}a_{22}= \frac{1}{2}, \quad \tilde{a}_{21}a_{32} + a_{33} = \frac{1}{2},
\end{equation}
from which one obtains $\tilde{a}_{21} = a_{22} = 1-\frac{1}{2a_{32}}$. Then the positivity condition $c_{30}=1-\frac{a_{32}}{a_{22}} \ge 0$ becomes
\begin{equation}
a_{32} \le 1-\frac{1}{2a_{32}},
\end{equation}
i.e., 
\begin{equation}
a_{32}^2 - a_{32} + \frac{1}{2} \le 0,
\end{equation}
which is impossible. This proves the non-existence of the three stage case.

\section*{Appendix 2: Extension to third order}

In this Appendix, we briefly present the strategy to extend the proposed method to third order.

To this end, we need to derive order conditions of the scheme (\ref{scheme1})-(\ref{scheme3}) up to third order. We consider the cases that $f^*=f^n$, $\tilde{f}^{n+1}$ or $f^{n+1}$. 

Substituting (\ref{fi}) into (\ref{scheme1}), one obtains
\begin{equation}
\begin{split}
f^{(i)} & = f^n + \Delta t\sum_{j=1}^{i-1}\tilde{a}_{ij}\mT (f^n + \Delta t\,\tilde{c}_j\mT (f^n) + \Delta t\,c_j\mQ (f^n)) \\ & + \Delta t\sum_{j=1}^i a_{ij}\mQ (f^n + \Delta t\,\tilde{c}_j\mT (f^n) + \Delta t\, c_j\mQ (f^n)) + O(\Delta t^3)\\
& = f^n + \Delta t\sum_{j=1}^{i-1}\tilde{a}_{ij}[\mT (f^n) + \Delta t\,\mT'(f^n)(\tilde{c}_j\mT (f^n) + c_j\mQ (f^n))] \\ & \quad + \Delta t\sum_{j=1}^ia_{ij}[\mQ (f^n)  + \Delta t\,\mQ'(f^n)(\tilde{c}_j\mT (f^n) + c_j\mQ (f^n))] +O(\Delta t^3)\\
& = f^n + \Delta t [\tilde{c}_i\mT (f^n) + c_i\mQ (f^n) ] + \Delta t^2\left[\sum_{j=1}^{i-1}\tilde{a}_{ij}\mT'(f^n)(\tilde{c}_j\mT (f^n)  + c_j\mQ (f^n)) \right. \\ & \left.\quad +\sum_{j=1}^ia_{ij}\mQ'(f^n)(\tilde{c}_j\mT (f^n) + c_j\mQ (f^n))\right] + O(\Delta t^3).
\end{split}
\end{equation}
Substituting it into (\ref{scheme2}) yields
\begin{equation}\label{ord3_0}
\begin{split}
\tilde{f}^{n+1} & =  f^n + \Delta t\sum_{i=1}^\nu \tilde{w}_i\mT \left\{ f^n + \Delta t[\tilde{c}_i\mT (f^n) + c_i\mQ (f^n) ] + \Delta t^2\left[\sum_{j=1}^{i-1}\tilde{a}_{ij}\mT'(f^n)(\tilde{c}_j\mT (f^n) + c_j\mQ (f^n)) \right.\right.\\ 
& \left.\left.\quad  +\sum_{j=1}^ia_{ij}\mQ'(f^n)(\tilde{c}_j\mT (f^n) + c_j\mQ (f^n)) \right] \right\} \\ 
& \quad +  \Delta t \sum_{i=1}^\nu w_i\mQ \left\{f^n + \Delta t [\tilde{c}_i\mT (f^n) + c_i\mQ (f^n) ] + \Delta t^2 \left[\sum_{j=1}^{i-1}\tilde{a}_{ij}\mT'(f^n)(\tilde{c}_j\mT (f^n) + c_j\mQ (f^n))\right.\right. \\ 
& \left.\left.\quad +\sum_{j=1}^ia_{ij}\mQ'(f^n)(\tilde{c}_j\mT (f^n) + c_j\mQ (f^n))\right]\right\} + O(\Delta t ^4)\\
&=  f^n + \Delta t\left[\left(\sum_{i=1}^\nu \tilde{w}_i\right)\mT (f^n)+\left(\sum_{i=1}^\nu w_i\right)\mQ (f^n)\right] + \Delta t^2\left[\left(\sum_{i=1}^\nu\tilde{w}_i\tilde{c}_i\right)\mT '(f^n)\mT (f^n) + \left(\sum_{i=1}^\nu\tilde{w}_ic_i\right)\mT '(f^n)\mQ (f^n) \right.\\ 
&\left.\quad + \left(\sum_{i=1}^\nu w_i\tilde{c}_i\right)\mQ '(f^n)\mT (f^n) + \left(\sum_{i=1}^\nu w_ic_i\right)\mQ '(f^n)\mQ (f^n)\right] \\
& \quad + \Delta t^3\left\{ \sum_{i=1}^\nu\sum_{j=1}^{i-1} [\tilde{w}_i\tilde{a}_{ij}\tilde{c}_j\mT'(f^n)\mT'(f^n)\mT(f^n) + \tilde{w}_i\tilde{a}_{ij}c_j\mT'(f^n)\mT'(f^n)\mQ(f^n)] \right.\\
& \quad+  \sum_{i=1}^\nu\sum_{j=1}^i [\tilde{w}_ia_{ij}\tilde{c}_j \mT'(f^n)\mQ'(f^n)\mT(f^n) + \tilde{w}_ia_{ij}c_j\mT'(f^n)\mQ'(f^n)\mQ(f^n)] \\
& \quad+ \frac{1}{2}\sum_{i=1}^\nu[\tilde{w}_i\tilde{c}_i\tilde{c}_i \mT''(f^n)(\mT(f^n),\mT(f^n)) +2\tilde{w}_i\tilde{c}_ic_i \mT''(f^n)(\mT(f^n),\mQ(f^n)) +\tilde{w}_ic_ic_i \mT''(f^n)(\mQ(f^n),\mQ(f^n))  ]  \\
& \quad+ \sum_{i=1}^\nu\sum_{j=1}^{i-1} [w_i\tilde{a}_{ij}\tilde{c}_j\mQ'(f^n)\mT'(f^n)\mT(f^n) + w_i\tilde{a}_{ij}c_j\mQ'(f^n)\mT'(f^n)\mQ(f^n)] \\
& \quad+  \sum_{i=1}^\nu\sum_{j=1}^i [w_ia_{ij}\tilde{c}_j \mQ'(f^n)\mQ'(f^n)\mT(f^n) + w_ia_{ij}c_j\mQ'(f^n)\mQ'(f^n)\mQ(f^n)] \\
& \left.\quad+ \frac{1}{2}\sum_{i=1}^\nu[w_i\tilde{c}_i\tilde{c}_i \mQ''(f^n)(\mT(f^n),\mT(f^n)) +2w_i\tilde{c}_ic_i \mQ''(f^n)(\mT(f^n),\mQ(f^n)) +w_ic_ic_i \mQ''(f^n)(\mQ(f^n),\mQ(f^n))  ] \right \} \\&\quad + O(\Delta  t^4),
\end{split}
\end{equation}
where the second-order Fr\'{e}chet derivative is given by
\begin{equation}
\mQ''(g)(f_1,f_2)=\lim_{\delta_1,\delta_2\rightarrow 0}\frac{\mQ(g+\delta_1f_1+\delta_2f_2)-\mQ(g+\delta_1f_1)-\mQ(g+\delta_2f_2)+\mQ(g)}{\delta_1\delta_2},
\end{equation}
which is a symmetric bilinear operator. 

In the case $f^*=f^n$, (\ref{scheme3}) gives (using the first order conditions $\sum_{i=1}^\nu \tilde{w}_i=\sum_{i=1}^\nu  w_i=1$)
\begin{equation} \label{ord3_1}
\begin{split}
f^{n+1} &=  \tilde{f}^{n+1} - \alpha \Delta t^2\mQ '(f^n)\mQ (f^n+\Delta t(\mT(f^n)+\mQ(f^n))) + O(\Delta t^4)\\
&=   \tilde{f}^{n+1} - \alpha \Delta t^2\mQ'(f^n)\mQ(f^n) -\alpha \Delta t^3[  \mQ'(f^n)\mQ'(f^n)\mT(f^n) + \mQ'(f^n)\mQ'(f^n)\mQ(f^n)]+ O(\Delta t^4),
\end{split}
\end{equation}
while in the case $f^*=\tilde{f}^{n+1}$ or $f^{n+1}$,
\begin{equation}\label{ord3_2}
\begin{split}
f^{n+1}& = \tilde{f}^{n+1} - \alpha \Delta t^2\mQ '(f^n+\Delta t(\mT(f^n)+\mQ(f^n)))\mQ (f^n+\Delta t(\mT(f^n)+\mQ(f^n))) + O(\Delta t^4)\\
& =   \tilde{f}^{n+1} - \alpha \Delta t ^2\mQ'(f^n)\mQ(f^n) - \alpha\Delta t^3[ \mQ''(f^n)(\mT(f^n),\mQ(f^n)) + \mQ''(f^n)(\mQ(f^n),\mQ(f^n)) \\ & \quad  + \mQ'(f^n)\mQ'(f^n)\mT(f^n) + \mQ'(f^n)\mQ'(f^n)\mQ(f^n)]+ O(\Delta t^4).
\end{split}
\end{equation}

On the other hand, if we Taylor expand the exact solution of (\ref{ode}) around time $t^n$, we have
\begin{equation}\label{fex3}
\begin{split}
f^{n+1}_{\text{exact}}& =  f^n + \Delta t[\mT (f^n)+\mQ (f^n)] + \frac{1}{2}\Delta t^2[\mT '(f^n)\mT (f^n) + \mT '(f^n)\mQ (f^n) + \mQ '(f^n)\mT (f^n) + \mQ '(f^n)\mQ (f^n)] \\ &  \quad + \frac{1}{6}\Delta t^3[\mT''(f^n)(\mT(f^n),\mT(f^n)) + 2\mT''(f^n)(\mQ(f^n),\mT(f^n)) + \mT''(f^n)(\mQ(f^n),\mQ(f^n)) \\ &\quad  +\mQ''(f^n)(\mT(f^n),\mT(f^n)) + 2\mQ''(f^n)(\mQ(f^n),\mT(f^n)) + \mQ''(f^n)(\mQ(f^n),\mQ(f^n)) \\ & \quad + (\mT+\mQ)'(f^n)(\mT+\mQ)'(f^n)(\mT+\mQ)(f^n)]+O(\Delta t^4).
\end{split}
\end{equation}
Comparing (\ref{fex3}) with (\ref{ord3_1}) or (\ref{ord3_2}), we obtain the following order conditions:
\begin{equation} \label{order3_1}
\begin{split}
& \sum_{i,j}\tilde{w}_i\tilde{a}_{ij}\tilde{c}_j = \sum_{i,j}\tilde{w}_i\tilde{a}_{ij}c_j = \sum_{i,j}\tilde{w}_ia_{ij}\tilde{c}_j = \sum_{i,j}\tilde{w}_ia_{ij}c_j \\
= & \sum_{i,j}w_i\tilde{a}_{ij}\tilde{c}_j = \sum_{i,j}w_i\tilde{a}_{ij}c_j = \sum_{i,j}w_ia_{ij}\tilde{c}_j - \alpha = \sum_{i,j}w_ia_{ij}c_j - \alpha= \frac{1}{6}, \\
& \sum_i \tilde{w}_i\tilde{c}_i\tilde{c}_i = \sum_i \tilde{w}_i\tilde{c}_ic_i = \sum_i \tilde{w}_ic_ic_i \\ 
= & \sum_i w_i\tilde{c}_i\tilde{c}_i = \sum_i w_i\tilde{c}_ic_i = \sum_i w_ic_ic_i = \frac{1}{3},\\
\end{split}
\end{equation}
in the case $f^*=f^n$, and 
\begin{equation} \label{order3_2}
\begin{split}
& \sum_{i,j}\tilde{w}_i\tilde{a}_{ij}\tilde{c}_j = \sum_{i,j}\tilde{w}_i\tilde{a}_{ij}c_j = \sum_{i,j}\tilde{w}_ia_{ij}\tilde{c}_j = \sum_{i,j}\tilde{w}_ia_{ij}c_j \\
= & \sum_{i,j}w_i\tilde{a}_{ij}\tilde{c}_j = \sum_{i,j}w_i\tilde{a}_{ij}c_j = \sum_{i,j}w_ia_{ij}\tilde{c}_j - \alpha = \sum_{i,j}w_ia_{ij}c_j - \alpha= \frac{1}{6}, \\
& \sum_i \tilde{w}_i\tilde{c}_i\tilde{c}_i = \sum_i \tilde{w}_i\tilde{c}_ic_i = \sum_i \tilde{w}_ic_ic_i \\ 
= & \sum_i w_i\tilde{c}_i\tilde{c}_i = \sum_i w_i\tilde{c}_ic_i - \alpha = \sum_i w_ic_ic_i - 2\alpha = \frac{1}{3},
\end{split}
\end{equation}
in the case $f^*=\tilde{f}^{n+1}$ or $f^{n+1}$.

Note that compared to the standard IMEX-RK (third) order conditions \cite{PR05}, the only difference is the terms containing $\alpha$.

Therefore, in order to get a third-order positivity-preserving scheme, one only needs to find RK coefficients in (\ref{scheme1})-(\ref{scheme3}) such that they satisfy the order conditions (\ref{second order}) and (\ref{order3_1}) (resp. (\ref{order3_2})) as well as the positivity conditions derived in Section \ref{subsec:pos} ($\alpha\ge 0$ and (\ref{Apos}) for type A and GSA schemes or (\ref{ARSpos}) for type ARS and GSA schemes). This can be done via a computer program.

\bibliographystyle{plain}
\bibliography{hu_bibtex}
\end{document}